\documentclass[11pt]{amsart}
\usepackage{amsmath, amsfonts, amsthm, amssymb, multicol, mathtools, dsfont, verbatim}
\usepackage[shortlabels]{enumitem}
\usepackage[shortlabels]{enumitem}
\usepackage{graphicx}
\usepackage{tikz-cd}
\usepackage{lipsum}
\usepackage{mathtools}
\usepackage{float, hyperref}
\usepackage{scalerel,stackengine, subcaption}
\usepackage[shortlabels]{enumitem}
\usepackage{yfonts}
\usepackage[margin=1.2in]{geometry}
\usepackage{pgfplots}
\usepackage{tikz}
\usepgfplotslibrary{fillbetween}
\pgfplotsset{width=10cm,compat=1.9}
\usepackage[square,sort,comma,numbers]{natbib}
\allowdisplaybreaks

\usepackage[square,sort,comma,numbers]{natbib}
\DeclarePairedDelimiterX{\bignorm}[1]{\bigg\Vert}{\bigg\Vert}{#1}
\DeclarePairedDelimiterX{\norm}[1]{\lVert}{\rVert}{#1}

\usepackage{fancyhdr}

\makeatletter
\def\@setauthors{%
  \begingroup
  \def\thanks{\protect\thanks@warning}%
  \trivlist
  \centering\footnotesize \@topsep30\p@\relax
  \advance\@topsep by -\baselineskip
  \item\relax
  \author@andify\authors
  \def\\{\protect\linebreak}

  \normalsize\lowercase{\authors}%
  
	\ifx\@empty\contribs
  \else
    ,\penalty-3 \space \@setcontribs
    \@closetoccontribs
  \fi
  \endtrivlist
  \endgroup
}
\def\@settitle{\begin{center}
\LARGE\lowercase{\@title}
  \end{center}%
}
\makeatother

\definecolor{lightblue}{HTML}{2B77A4}
\definecolor{darkred}{HTML}{9E0D0D}
\hypersetup{
	colorlinks=true,
	linkcolor=darkred,
	urlcolor=darkred,
	citecolor=lightblue
}
\numberwithin{equation}{section}

\newtheorem{thm}{Theorem}[section]
\newtheorem{lma}[thm]{Lemma}

\newtheorem{defn}[thm]{Definition}

\newtheorem{prop}[thm]{Proposition}

\newtheorem{rem}[thm]{Remark}

\newtheorem*{thmA}{Theorem A}
\newtheorem*{thmB}{Theorem B}

\renewcommand{\epsilon}{\varepsilon}

\theoremstyle{definition}

\theoremstyle{conjecture}

\newcommand{\abso}[1]{\left| #1 \right|}

\newcommand{\RR}{\mathbb{R}}

\newcommand{\ubox}{\overline{\textup{dim}}_\textup{B}}
\newcommand{\lbox}{\underline{\textup{dim}}_\textup{B}}

\newcommand{\haus}{{\textup{dim}}_\textup{H}}
\newcommand{\four}{{\textup{dim}}_\textup{F}}

\renewcommand{\geq}{\geqslant}
\renewcommand{\leq}{\leqslant}

\newcommand{\ubd}{\overline{\dim}_{\textup{B}}}
\newcommand{\lbd}{\underline{\dim}_{\textup{B}}}
\newcommand{\hd}{\dim_{\textup{H}}}

\newcommand{\fd}{\dim_{\mathrm{F}}}

\title{Fourier analytic variants of the Furstenberg\\ and Kakeya problems}

\author{ Jonathan M. Fraser${}^{1}$ and Lijian Yang${}^{2}$\\ \\
 ${}^{1,2}$ School of Mathematics and Statistics, University of St Andrews, Scotland\\
\MakeLowercase{Emails: ${}^{1}$jmf32@st-andrews.ac.uk and ${}^{2}$ly51@st-andrews.ac.uk}}
\thanks{JMF was  financially supported by a  \emph{Leverhulme Trust Research Project Grant} (RPG-2023-281)  and an \emph{EPSRC Open Fellowship} (EP/Z533440/1).}

\begin{document}

%\date{}

\maketitle
\thispagestyle{empty}

\begin{abstract}
We study several distinct but related Fourier analytic variants of the well-known Kakeya  and Furstenberg set problems in the plane.  For example, given $0<s,t<1$, we call a set $K \subseteq \mathbb{R}^2$ an $(s,t)$-Kakeya set if there exists a set of directions $E \subseteq S^1$ with Hausdorff dimension at least $t$ such that, for each $e \in E$, the set $K$ contains a subset of a unit line segment in direction $e$ whose Fourier dimension, viewed as a subset of $\mathbb{R}$, is at least $s$. For  $\Delta(s,t)$ defined to be  the infimum of the Fourier dimension among all $(s,t)$-Kakeya sets in $\mathbb{R}^2$, we prove that
\[
\frac{2st}{s+2t} \leq \Delta(s,t) \leq \min\{s,2t\}.
\]
These bounds, though distinct, are asymptotically equivalent as either $s$ or $t$ tends to zero.  We also obtain  upper and lower bounds in the Furstenberg set version of the problem and in the case where the Hausdorff dimension of the collection of lines is replaced by the Fourier dimension.\\

\emph{Mathematics Subject Classification 2020}: 28A80, 42A38, 28A78.

\emph{Key words and phrases}: Fourier dimension, Kakeya set, Furstenberg set, Salem set.

\end{abstract}

\tableofcontents

\section{Introduction and main results}

\subsection{Kakeya and Furstenberg problems}

A set $K \subseteq \mathbb{R}^2$ is a \emph{Kakeya set}   if for every direction $e \in S^1$, there exists a unit line segment
\[
I_e = \{ a_e + t e : t \in [0,1] \}
\]
for some translation  $a_e \in \mathbb{R}^2$ such that $I_e \subseteq K$. That is, $K$ contains a unit line segment in every possible direction.  Kakeya-type problems lie at the intersection of geometric measure theory and harmonic analysis, and have deep connections with Fourier restriction phenomena and dimension theory.  There has been a huge amount of progress and attention on these problems recently, including the  spectacular resolution of the Kakeya conjecture in $\mathbb{R}^3$ by Wang and Zahl in 2025 \cite{wangzahl}.

In this paper we investigate the dimensions of various  Kakeya-type sets in $\mathbb{R}^2$ where  we restrict both the set of directions and the portion of the associated line segment contained in  the set.  More precisely, given $0<s,t\leq 1$, we consider Kakeya-type sets for which  there exists a set of directions $E \subseteq S^1$ with Hausdorff dimension at least $t$ such that, for each $e \in E$, the set $K$ contains a subset of a unit line segment in direction $e$ whose Fourier dimension, viewed as a subset of $\mathbb{R}$, is at least $s$. This $(s,t)$-framework therefore quantifies both the size of the direction set and the Fourier analytic structure of the Kakeya set within each line segment. The goal is then to understand how small the Fourier dimension of these Kakeya-type sets can be.  This analysis provides a refinement of work of Oberlin \cite{Oberlin} which established that the Fourier dimension of any Kakeya set in the plane is 2 (the maximal value possible).  We focus mainly on the Fourier dimension, but we also use the Hausdorff and box dimensions as well.  We recall the  definitions later in  Definition \ref{dimensionsdef}. The Fourier dimension is generally very difficult to determine or even to usefully  estimate.   For example, while the Hausdorff dimension of self-similar sets in $\RR$ is well understood under the open set condition, the Fourier dimension is known only in a few  special cases when it is either zero or the ambient spatial dimension, see \cite{amlan} and the references therein. For example, the middle-third Cantor set has Fourier dimension equal to zero and the unit interval has Fourier dimension 1.  However, for more general self-similar sets  the Fourier dimension remains poorly understood, and even the question of whether it is zero or strictly  positive is often open.

We now give the formal definition of the class of Kakeya-type sets we consider.  Since the subsets of the line segments are measured using \emph{Fourier} dimension and the direction set is measured using \emph{Hausdorff} dimension, we call this an FH-$(s,t)$-Kakeya set. That said, in order to simplify exposition, we use the term $(s,t)$-Kakeya set to mean an FH-$(s,t)$-Kakeya set in cases where the context is clear.

\begin{defn} \label{ksdef}
For $s,t \in (0,1]$, a set $K \subseteq \mathbb{R}^2$ is called an FH-$(s,t)$-Kakeya set if there exists a subset $E \subseteq S^1$ with $\haus E \geq t$ such that for each $e \in E$, there exist a point $a_e \in \mathbb{R}^2$ and a subset of parameters $R_e \subseteq [0,1]$ with $\four R_e \geq s$ such that
\begin{equation}
	L_e = \{ a_e + r e : r \in R_e \} \subseteq K. \nonumber
\end{equation}
\end{defn}

Our first main result, Theorem A in Section \ref{sec:kakeya}, obtains partial progress to understanding how small the Fourier dimension of FH-$(s,t)$-Kakeya sets can be.

The above considerations were partly inspired by the related \emph{Furstenberg set problem} and we consider Fourier analytic variants of this too.   Let $s \in (0,1]$ and $t \in (0,2]$. A set $K \subseteq \mathbb{R}^2$ is called an $(s,t)$-\emph{Furstenberg set} if there exists a collection of affine lines $\Lambda \subseteq A(2,1)$ with
\[
\haus \Lambda \geq t
\]
such that
\[
\haus(K \cap \ell) \geq s
\]
for every $\ell \in \Lambda$.   The Furstenberg set problem is then to determine how small the Hausdorff dimension of an $(s,t)$-Furstenberg set can be in terms of $s$ and $t$.  

 As with the Kakeya problem, the Furstenberg set problem is deeply connected to many important areas of mathematics, including harmonic analysis and incidence geometry.  Indeed, in a certain sense it can be considered a continuous analogue of the seminal Szemer\'edi--Trotter theorem.  Moreover, the problem also has seen significant attention and progress in recent years.    In particular, the works of Orponen and Shmerkin \cite{OS2021,OS2023} and Shmerkin and Wang \cite{SW2022} established a sequence of increasingly strong results and played an important role in the development of the subject prior to the recent complete solution of the Furstenberg problem by Ren and Wang. 
\begin{thm}[Ren--Wang {\cite[Theorem~1.1]{Furstenberg2023}}]
Let $K \subseteq \mathbb{R}^2$ be an $(s,t)$-Furstenberg set. Then
\begin{equation}
	\dim_{\textup{H}} K \geq \min\left\{s+t,\frac{3s+t}{2},s+1\right\}.\nonumber
\end{equation}
Moreover, this bound is sharp.
\end{thm}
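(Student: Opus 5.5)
The sharpness claim and the lower bound need separate arguments, and I would do sharpness first because it is elementary. For each of the three regimes I would exhibit an explicit $(s,t)$-Furstenberg set whose Hausdorff dimension equals the corresponding expression.

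\emph{Sharpness.} The constructions I would use are as follows.
\begin{enumerate}[(i)]
\item For $t\leq s$: take $K=A\times\RR$ restricted to a compact piece, with $A$ a compact $s$-dimensional Ahlfors regular set. Use a $t$-dimensional family of lines $\ell$ chosen so that each intersection $K\cap\ell$ is a copy of $A$. For example, take lines through a fixed $t$-dimensional arrangement of vertical-ish directions. This gives dimension $s+t$, and the product formula for regular sets computes it.
\item For the middle range: use Wolff's grid example. At scale $\delta$, take the grid $\{\delta^{-s/2}\text{-spaced}\}$-type discretised product and the lines through many grid points. Then build the Cantor-type limit by a multiscale concatenation, which yields dimension $(3s+t)/2$.
\item For $t\geq 2-s$: take $K=A\times[0,1]$. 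Every non-vertical line meets it in an $s$-dimensional set, so the family of lines has full dimension $2\geq t$, and $\dim_{\textup H}K=s+1$.
\end{enumerate}

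\emph{Lower bound: reduction.} By Frostman's lemma and a standard pigeonholing I would reduce to a discretised statement. At scale $\delta$ we have a $(\delta,t)$-set of tubes $\mathcal T$, and for each $T\in\mathcal T$ a $(\delta,s)$-set of $\delta$-balls inside $T\cap K$. The claim then becomes that the union has $\gtrsim \delta^{-\gamma+\epsilon}$ balls, where $\gamma$ is the minimum. The problem then splits into the three regimes. The two extreme regimes are comparatively soft. For $t\leq s$, an incidence bound of Fu--Ren type, obtained from an $L^2$ / Cauchy--Schwarz double counting with Frostman tubes, gives $s+t$. For $t\geq 2-s$, point--line duality together with an $L^2$ Fourier (Kaufman-type) energy argument gives $s+1$.

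\emph{Lower bound: the middle regime.} This is the main obstacle: proving $(3s+t)/2$ for $s<t<2-s$. My plan would follow the Orponen--Shmerkin / Ren--Wang route.
\begin{enumerate}[(a)]
\item Use a multiscale decomposition and induction on scales, with branching-function (Lipschitz profile) analysis. This reduces to configurations that are ``semi-well-spaced'' at every scale, i.e.\ either Katz--Tao-regular or well-spaced in the tube family.
\item For well-spaced configurations, apply the Guth--Solomon--Wang high--low incidence estimate, which gives the sharp count.
\item For the remaining Katz--Tao-type pieces, apply the sharp Katz--Tao incidence bound of Fu--Ren.
\item Glue the scales together via the concavity/convexity of the exponent $(3s+t)/2$ in the scale parameters, using the Orponen--Shmerkin $\epsilon$-improvement (ABC sum-product) to exclude the degenerate product-like structures at intermediate scales.
\end{enumerate}
The delicate part is making the induction close without $\epsilon$-losses accumulating across $\log(1/\delta)$ scales. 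I would handle this by choosing a fixed number of scales depending on $\epsilon$ and applying the regular-case estimates only on each block.
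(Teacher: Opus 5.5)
\textbf{Lower bound.} The paper does not prove this theorem. It quotes it from Ren--Wang as background, so the citation is the paper's entire argument.

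Your lower-bound plan is a fair summary of the route in the literature:
\begin{enumerate}[(1)]
\item the elementary Cauchy--Schwarz tube count for $t<s$;
\item the Fu--Ren Fourier argument when $s+t\geq 2$;
\item for the middle range, the Orponen--Shmerkin multiscale reduction combined with the Guth--Solomon--Wang well-spaced estimate and the Fu--Ren Katz--Tao estimate.
\end{enumerate}
Steps (a)--(d), however, name the theorems to invoke rather than show how they fit together. The gluing in (d) is precisely the new content of Ren--Wang, so that part is a roadmap, not a proof. It is consistent with what the paper relies on.

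\textbf{Error in sharpness example (i).} The one place where you commit to a checkable argument, the sharpness constructions, contains an actual error. As written, $K$ is $A\times[a,b]$, and $\haus(A\times[a,b])=\haus A+1=s+1$ for every $A$. The dimension of $K$ does not depend on which lines you then declare to be in $\Lambda$. So (i) is really just your example (iii) again, and says nothing about sharpness of $s+t$ when $t<1$.

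The set must consist only of the slices along a $t$-dimensional family. For instance, take $K=C\times A$ with $C\subseteq[0,1]$ compact and $\haus C=t$, and let $\Lambda$ be the vertical lines $\{c\}\times\RR$, $c\in C$. Then:
\begin{enumerate}[(1)]
\item $\Lambda$ is a $t$-dimensional family, which is allowed because $t\leq s\leq 1$;
\item each $K\cap\ell$ is a copy of $A$;
\item $\haus K\leq \haus C+\ubox A=t+s$, since $A$ is Ahlfors regular.
\end{enumerate}
Equivalently, your slanted-line version works if you set $K=\bigcup_{\ell\in\Lambda}\ell\cap(A\times[0,1])$. This is a Lipschitz image of $A\times\Lambda$, so $\haus K\leq\haus\Lambda+\ubox A=t+s$.

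\textbf{Example (ii).} The scales are garbled: a ``$\delta^{-s/2}$-spaced'' grid is coarser than the unit square. A correct single-scale configuration is as follows.
\begin{enumerate}[(1)]
\item Points: $\{(i\delta^{s},j\delta^{(s+t)/2}):0\leq i<\delta^{-s},\ 0\leq j<2\delta^{-(s+t)/2}\}$.
\item Lines: $y=k\delta^{(t-s)/2}x+l\delta^{(s+t)/2}$, with $0\leq k<\delta^{-(t-s)/2}$ and $0\leq l<\delta^{-(s+t)/2}$.
\end{enumerate}
Each line passes through the $\delta^{-s}$ grid points with $x=i\delta^{s}$, since $y=(ki+l)\delta^{(s+t)/2}$; these form a $(\delta,s)$-set. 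Provided $s\leq t\leq 2-s$, the $\delta^{-t}$ lines form a $(\delta,t)$-set and the points are $\delta$-separated. There are $\approx\delta^{-(3s+t)/2}$ points. You still need Wolff's multiscale construction to turn this into a Hausdorff-dimension example.

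\textbf{Example (iii).} This is fine and is the paper's $K_1$ from Proposition~\ref{sexample}. Strictly speaking, only an open set of nearly horizontal lines meets $A\times[0,1]$ in a full copy of $A$, but that family already has dimension $2$.
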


One may also formulate box dimension and packing dimension variants of the  Furstenberg set problem by measuring both the collection of lines $\Lambda$ and the corresponding intersections $K \cap \ell$ using box dimension or packing dimension, respectively. More precisely, in the (upper) box dimension variant one assumes
\begin{equation} \label{boxvariant}
\ubox \Lambda \geq t
\qquad\text{and}\qquad
\ubox (K \cap \ell) \geq s \quad \text{for all } \ell \in \Lambda,
\end{equation}
while in the packing dimension variant one assumes
\begin{equation} \label{pvariant}
\dim_{\textup{P}} \Lambda \geq t
\qquad\text{and}\qquad
\dim_{\textup{P}} (K \cap \ell) \geq s \quad \text{for all } \ell \in \Lambda.
\end{equation}
In each case, the problem is to determine the smallest possible (upper) box dimension or packing dimension of a corresponding $(s,t)$-Furstenberg set $K$. These variants were resolved by Fraser \cite{boxdimensionff}, as follows.
\begin{thm}[Fraser {\cite[Theorems~1.1--1.4]{boxdimensionff}}]
Let $s \in (0,1]$ and $t \in (0,2]$ and consider the box dimension and packing dimension variants of the $(s,t)$-Furstenberg problem in the plane.
\begin{enumerate}
	\item For the (upper) box dimension version, if $K \subseteq \mathbb{R}^2$ is a bounded $(s,t)$-Furstenberg set as in \eqref{boxvariant}, then
	\begin{equation}
		\ubox K \geq \max\{s,t-1\}\nonumber
	\end{equation}
	and this bound is sharp.
	\item For the packing dimension version,  if $K \subseteq \mathbb{R}^2$ is an $(s,t)$-Furstenberg set as in \eqref{pvariant},  then
	\begin{equation}
		\dim_{\textup{P}} K \geq \max\left\{s,\frac{t}{2}\right\}\nonumber
	\end{equation}
	and this bound is sharp.
\end{enumerate}
\end{thm}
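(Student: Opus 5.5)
The statement has two parts, and for each I would prove a lower bound and then a matching construction.

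\emph{Box dimension, lower bound.} The bound $\ubox K \geq s$ is immediate. Any $\ell \in \Lambda$ has $K \cap \ell \subseteq K$, and upper box dimension is monotone.

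For $\ubox K \geq t-1$ I would use a counting argument at scale $\delta$. Cover $K$ by $N_\delta(K)$ $\delta$-squares. Since $\ubox \Lambda \geq t$, there are arbitrarily small $\delta$ for which $\Lambda$ contains about $\delta^{-t+\epsilon}$ lines that are $\delta$-separated in $A(2,1)$.

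Each such line meets $K$ in a nonempty set, which I take to have at least two $\delta$-separated points once $\delta$ is small, since $\ubox(K\cap\ell)\geq s>0$. So each line passes through at least two squares of the cover that are separated by at least $\delta$. A pair of $\delta$-squares at distance $r \gtrsim \delta$ determines a family of lines of measure about $\delta^2/r$ in $A(2,1)$. This family therefore contains at most about $r/\delta \lesssim \delta^{-1}$ $\delta$-separated lines, using boundedness of $K$.

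Hence $\delta^{-t} \lesssim N_\delta(K)^2\,\delta^{-1}$. That alone gives only $N_\delta \gtrsim \delta^{-(t-1)/2}$, so I would refine it. Fix one square $Q$ through which each line passes. The lines through a given $Q$, meeting $K$ elsewhere at a point $\delta$-separated from $Q$, number at most $\delta^{-1}$ times the number of squares they hit. Summing over $Q$ gives $\delta^{-t} \lesssim N_\delta \cdot \delta^{-1}$, that is $N_\delta \gtrsim \delta^{1-t}$. This refinement is the main obstacle: $\delta$-separated lines through a common $\delta$-ball number only about $\delta^{-1}$, and the bound must be organised so that each line is charged to a single square.

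\emph{Box dimension, sharpness.} I would take $K = C \times [0,1]$ or a grid-type product. If $t \leq 1+s$, let $K = F\times[0,1]$ with $\ubox F = \max\{s,t-1\}$. The lines are then nearly vertical lines through $F \times\{0\}$ with directions from a box-dimension set, and each meets $K$ in a set of dimension at least $s$. A small tilt range still meets $K$ in a full copy of $F$-scale structure. This needs a check: a line of slope $m$ meets $F\times[0,1]$ in a segment only if it is vertical. So instead I would use a countable union, which box dimension tolerates only when it is bounded. In practice I would use countable dense families: $\ubox$ of a countable dense set equals that of its closure, and this makes box constructions cheap. For instance, let $\Lambda$ be a countable set of lines whose closure has box dimension $t$, and let $K$ be the union of countable sets of upper box dimension $s$ on each line, arranged inside a common compact set of dimension $\max\{s,t-1\}$.

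\emph{Packing dimension.} I would run the same scheme. The bound $\geq s$ again follows from monotonicity. For $\geq t/2$, write $K=\bigcup K_i$ and use $\dim_{\textup{P}} = $ modified upper box dimension. The map sending a pair of points of $K$ to the line through them is Lipschitz away from the diagonal, and the lines hitting $K_i$ in two separated points form a set of upper box dimension at most $2\ubox K_i$. A Baire category argument then places $\Lambda$, up to a set of small packing dimension, in countably many such images, giving $t \leq 2\dim_{\textup{P}} K$. For sharpness I would use $F \times F$-type grid constructions with $F$ of packing dimension $t/2$, or dimension $s$ when that is larger, whose lines joining grid points have packing dimension $2\dim_{\textup{P}} F$.
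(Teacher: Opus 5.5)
The paper does not prove this theorem; it only quotes it from Fraser's earlier work. Judged on its own, your proposal gets both lower bounds essentially right, after some cleaning. Neither sharpness claim is established, however, and sharpness is half of each assertion.

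\textbf{Lower bounds.} For $\ubox K\geq t-1$ you only need $K\cap\ell\neq\emptyset$. A fixed $\delta$-square meets $\lesssim\delta^{-1}$ lines that are $\delta$-separated in $A(2,1)$, so $N_\delta(\Lambda)\lesssim\delta^{-1}N_\delta(K)$, and that is the whole argument. You should drop the two-point discussion, for two reasons.
\begin{itemize}
\item The claim of ``two $\delta$-separated points once $\delta$ is small'' is not uniform in $\ell$, and your separated family of lines changes with $\delta$.
\item Two squares at distance $r$ determine $\lesssim 1/r$ separated lines, not $r/\delta$.
\end{itemize}
That non-uniformity is precisely why the box answer is $t-1$ rather than $t/2$.

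The packing argument is correct once you use pairs in $K_i\times K_j$ with bounded pieces $K_i$, since the two points of $K\cap\ell$ need not lie in the same piece. No Baire category argument is needed. Every $\ell\in\Lambda$ contains two distinct points of $K$, so $\Lambda\subseteq\bigcup_{i,j,m}\Phi\bigl((K_i\times K_j)\cap\{|x-y|\geq 1/m\}\bigr)$, with $\Phi$ Lipschitz on each piece. Hence $t\leq\sup_{i,j}(\ubox K_i+\ubox K_j)$, and taking the infimum over covers gives $t\leq 2\dim_{\textup{P}}K$.

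\textbf{Sharpness: the gap.} Your product $F\times[0,1]$ has upper box dimension $\ubox F+1=\max\{s,t-1\}+1$. It fails for dimensional reasons, not because of slopes. Your fallback, ``countable sets \ldots\ arranged inside a common compact set of dimension $\max\{s,t-1\}$'', only restates what must be built.

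The missing idea is scale separation, exploiting the failure of countable stability for box dimension. One way to do it:
\begin{itemize}
\item Take $P\subseteq\RR$ compact with $\ubox P=t-1$, or a single point if $t\leq 1$.
\item Let $\Lambda=\{\ell_n\}$, with $\ell_n=\{(p_n,0)+re_n:r\in\RR\}$, be a countable dense subset of the lines through $P\times\{0\}$ with directions in a fixed arc. For $t<1$, use directions in a set of box dimension $t$. Then $\ubox\Lambda=\ubox\overline{\Lambda}=t$.
\item Set $K=\bigcup_n\{(p_n,0)+\rho_n r e_n: r\in S\}$, where $S\ni 0$ is compact with $\ubox S=s$ and $\rho_n=2^{-2^n}$.
\end{itemize}
At scale $\delta$ only the $O(\log\log(1/\delta))$ pieces with $\rho_n\geq\delta$ are resolved. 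All other pieces lie within $\delta$ of $P\times\{0\}$. Hence $N_\delta(K)\lesssim N_\delta(P)+\log\log(1/\delta)\,\delta^{-s-\varepsilon}$, which gives $\ubox K\leq\max\{s,t-1\}$.

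For packing dimension this trick is unavailable: countable stability forces $\Lambda$ and every $K\cap\ell$ to be uncountable. Your $F\times F$ suggestion also overshoots. Since $\dim_{\textup{P}}(F\times F)\geq\haus F+\dim_{\textup{P}}F$, this equals $t$, not $t/2$, whenever $\haus F=\dim_{\textup{P}}F=t/2$. Moreover, nothing in your sketch makes the lines joining grid points meet $K$ in sets of packing dimension $s$.

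What is needed is a genuinely multiscale construction, and you do not supply one. One possible shape alternates two kinds of scale blocks:
\begin{itemize}
\item blocks in which $\Lambda$ grows at rate $t$ while $K$ grows at rate $t/2$;
\item blocks in which the fibres $K\cap\ell$ acquire their dimension while $\Lambda$ is essentially frozen.
\end{itemize}
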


By considering different notions of dimension in place of Hausdorff dimension we gain a more holistic understanding of the underlying mechanisms which constrain the geometry of Kakeya-type and Furstenberg sets.  In this paper we consider the Fourier dimension version of the Furstenberg set problem and obtain partial progress, see Theorem B.  We also consider the variant where the size of the line set is measured by Hausdorff dimension.  We defer the precise formulations of these variants until Section \ref{sec:furstenberg}.

\subsection{Main results:~Kakeya variants} \label{sec:kakeya}

Given $s,t \in (0,1]$, let $\Delta_{\mathcal{K}}^{F,H}(s,t)$ be the infimal Fourier dimension of an FH-$(s,t)$-Kakeya set in $\mathbb{R}^2$ (defined above in Definition \ref{ksdef}), that is,
\[
\Delta_{\mathcal{K}}^{F,H}(s,t)
=
\inf \{ \four K : K \text{ is an FH-$(s,t)$-Kakeya set} \}.
\]
We now state our  main result which gives upper and lower bounds for $\Delta_{\mathcal{K}}^{F,H}(s,t)$.
\begin{thmA}\label{thma}
For all $s,t \in (0,1]$, 
\begin{equation}
	\frac{2st}{s+2t}
	\leq
	\Delta_{\mathcal{K}}^{F,H}(s,t)
	\leq
	\min\{s,2t\}. \nonumber
\end{equation}
\end{thmA}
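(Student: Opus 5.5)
The proof has two parts: a construction for the upper bound and a projection/slicing argument for the lower bound.

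\emph{Upper bound $\min\{s,2t\}$.} I would exhibit two families of examples.

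For the bound $s$, fix a Salem set $R\subseteq[0,1]$ with $\four R=\hd R=s$; Salem sets exist by the classical constructions of Kahane or Bluhm. Fix also a set of directions $E$ with $\hd E=t$. The natural candidate is the radial set
\[
K=\{re: r\in R,\ e\in E\}.
\]
Using Fourier dimension $\le$ Hausdorff dimension, this gives $\four K\le\hd K\le \hd R+\overline{\dim}_{\rm B}E$. That is too large, so the radial choice has to be abandoned. I would instead place all the fibres inside a single set that is small in the Fourier sense.

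The key observation is that a set contained in a line has Fourier dimension $0$ as a subset of $\RR^2$. More generally, if $K\subseteq \RR^2$ has $\four K>0$, then every Fourier-decaying measure on $K$ must decay in all directions. So to realise the value $s$ I would choose $K=A\times A$, or a rotated version, with $A$ a Salem set of dimension $s/2$; then $\four(A\times A)\le \hd(A\times A)=s$. I would then show that $A\times A$ contains, along a $t$-dimensional family of directions, subsets with Fourier dimension $s$. Those subsets are obtained as images of the projection-type maps $(x,y)\mapsto x+\lambda y$, using the fact that $A+\lambda A$ typically has Fourier dimension $\min\{1,s\}$.

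For the bound $2t$, take $E$ to be a $t$-dimensional set of directions and a family of lines whose union is a ``cone'' over a Salem set $C$ of dimension $t$. Concretely, set
\[
K=\{(x,xc): x\in[0,1],\ c\in C\}.
\]
This is a union of full segments, so the fibre condition holds for every $s\le 1$. Its Fourier dimension is bounded by $2\hd C=2t$: take the Fourier transform of any measure on $K$ and test along the direction transverse to $C$, using that $K\cap\{x=x_0\}$ is a scaled copy of $C$. I would argue via a Fourier slice/projection bound, namely that the pushforward of $\mu$ under $(x,y)\mapsto y/x$ inherits decay of exponent $\four K/2$ on a set of dimension $t$.

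\emph{Lower bound $\frac{2st}{s+2t}$.} Let $\mu$ be a measure on $K$ with $|\hat\mu(\xi)|\lesssim|\xi|^{-\sigma/2}$, where $\sigma<\four K$. The plan is an energy/incidence argument.

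1. Use Frostman measures $\nu_e$ on $R_e$ with $|\hat\nu_e(\eta)|\lesssim|\eta|^{-s'/2}$, together with a $t'$-Frostman measure $\lambda$ on $E$.
2. Form $\rho=\int \nu_e^{\rm lifted}\,d\lambda(e)$, which is supported on $K$.
3. Compute $\hat\rho(\xi)=\int \hat\nu_e(\xi\cdot e)e^{-2\pi i \xi\cdot a_e}\,d\lambda(e)$ and bound it by $|\xi|^{-s'/2}$ times the contribution from directions where $|\xi\cdot e|\gtrsim |\xi|^{\alpha}$. The remaining directions, those near $\xi^\perp$, form an arc of length $|\xi|^{\alpha-1}$ and have $\lambda$-mass at most $|\xi|^{(\alpha-1)t'}$.
4. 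This yields
\[
|\hat\rho(\xi)|\lesssim |\xi|^{-\alpha s'/2}+|\xi|^{-(1-\alpha)t'}.
\]
Optimising gives $\alpha=2t'/(s'+2t')$, hence decay exponent $s't'/(s'+2t')$ and $\four K\ge 2s't'/(s'+2t')$. Letting $s'\to s$ and $t'\to t$ gives the bound.

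The main technical obstacle is measurability of the choice $e\mapsto(a_e,\nu_e)$ needed to define $\rho$. I would handle it by passing to a compact subset of $E$ and a Borel selection, using standard measurable-selection theorems.
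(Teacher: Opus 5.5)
Your lower bound is the paper's argument (Oberlin's method), and the exponent $2st/(s+2t)$ comes out correctly. One step is left implicit: the constants in $|\widehat{\nu_e}(\eta)|\lesssim|\eta|^{-s'/2}$ must be uniform in $e$. The paper arranges this by passing to $E_n=\{e: |\widehat{\nu_e}(\xi)|\le n|\xi|^{-s/2+\varepsilon}\}$ and using $\hd E_n\to t$.

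The genuine gap is the upper bound $s$: the set $A\times A$ contains no admissible fibre in any direction. A non-vertical line $\ell$ projects bi-Lipschitzly onto the first coordinate axis, carrying $\ell\cap(A\times A)$ into $A$. A vertical line meets $A\times A$ in a copy of $A$. Hence for every line $\ell$,
$\four(\ell\cap(A\times A))\le\hd(\ell\cap(A\times A))\le\hd A=s/2<s$.
The sets $A+\lambda A$ you invoke are \emph{projections} of $A\times A$, not subsets of $A\times A$ lying on lines; you have conflated projections with slices.

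More fundamentally, the route $\four K\le\hd K=s$ cannot work. An FH-$(s,t)$-Kakeya set is a Hausdorff $(s,t)$-Furstenberg set, since $\hd\ge\four$ on each fibre and the family of lines maps Lipschitzly onto $E$. So by Ren--Wang, $\hd K\ge\min\{s+t,(3s+t)/2,s+1\}>s$. You need an example whose Fourier dimension is far below its Hausdorff dimension. The paper takes $K_1=X_1\times[0,1]$ with $\four X_1=s$. Its Fourier dimension is exactly $s$ by the product formula for Fourier dimension. Every line in an arc of nearly horizontal directions meets it in an affine copy of $X_1$, so it is an $(s,1)$-, hence $(s,t)$-, Kakeya set.

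For the bound $2t$, your cone is the paper's $K_0$, but the mechanism you state is false. Suppose $(x,y)\mapsto y/x$ pushed $\mu$ forward to a measure $\rho$ on $C$ with $|\widehat{\rho}(\omega)|\lesssim|\omega|^{-\four K/2}$. Then you would get $\four K\le\four C=t$, contradicting Oberlin's bound $\four K_0\ge 2t$. Fourier decay does not survive this nonlinear map; what survives is a Frostman bound.

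Here is the correct version. Restrict $\widehat{\mu}$ to the line $\RR e^{\perp}$; this gives decay $|\omega|^{-\sigma/2}$ for the projection of $\mu$ along $e$. Lemma~\ref{projective} then yields $\mu(T)\lesssim r^{\sigma/2}$ for every $r$-tube $T$ through the origin, uniformly in the direction $e$. The sector $\{(x,y):|y/x-c|<r,\ 0<x\le 1\}$ lies in such a tube, so $\rho(B(c,r))\lesssim r^{\sigma/2}$. The mass distribution principle then forces $\sigma/2\le\hd C=t$. This is exactly the paper's proof of Proposition~\ref{2texample}, via the tangent-line projections $\pi_e$.
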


\begin{rem}
The same estimates hold for $\Delta_{\mathcal{K}}^{F,F}(s,t)$, which is defined by replacing the Hausdorff dimension condition $\haus E \geq t$ on the direction set with  the stronger Fourier dimension condition $\four E \geq t$. The lower bound follows immediately from Theorem~A, and the upper bound follows from Proposition~\ref{2texample} by taking the direction set $E$ to be a Salem set of dimension $t$.
\end{rem}We obtain  the lower bound in Theorem~A by proving that for every $(s,t)$-Kakeya set $K$,
\begin{equation}
	\four K \geq \frac{2st}{s+2t},\nonumber
\end{equation}
see Theorem \ref{thm1} and its proof. The two upper bounds in Theorem~A come from rather different geometric constructions. The bound $s$ is obtained from a product-type example, while the bound $2t$ arises from a configuration in which all line segments share a common starting point. One example shows that there exists an $(s,1)$-Kakeya set $K_1$ with $\four K_1 = s$; see Proposition~\ref{sexample} for more details. The other  example is obtained by taking each $L_e$ to be the full unit interval while all segments share the same starting point; see Proposition~\ref{2texample} for more details. In fact, for the $(1,t)$-Kakeya set $K_0$ in Proposition~\ref{2texample}, we can compute that the Fourier dimension of $K_0$ is exactly $2t$. Oberlin \cite{Oberlin} proved that a $(1,t)$-Kakeya set containing full unit line segments has Fourier dimension at least $2t$, and our example $K_0$ shows that this bound is sharp. It is worth pointing out, however, that this does not immediately give an exact result for general $(1,t)$-Kakeya sets, even when all the line segments share a common starting point as in the construction of $K_0$. Indeed, a subset of a line with Fourier dimension $1$ need not contain any non-empty interval and there is a huge difference between these cases. Further, the result in \cite{restrictedkakeyaconjecture} shows that the Hausdorff dimension of such a restricted Kakeya set is at least $t+1$, which implies that $K_0$ is not a Salem set for $t<1$.

The upper and lower bounds from Theorem~A are asymptotically sharp in the following sense. For fixed $s$, as $t \to 0$, we have
\[
\frac{2st}{s+2t} \sim 2t = \min\{s,2t\},
\]
while for fixed $t$, as $s \to 0$, we have
\[
\frac{2st}{s+2t} \sim s = \min\{s,2t\}.
\]
See Figure~\ref{fig:asymptotic-bounds} for representative plots of the upper and lower bounds and an illustration of this asymptotic behavior.
\begin{figure}[h]
	\centering
	\begin{minipage}{0.48\textwidth}
		\centering
		\includegraphics[width=\textwidth]{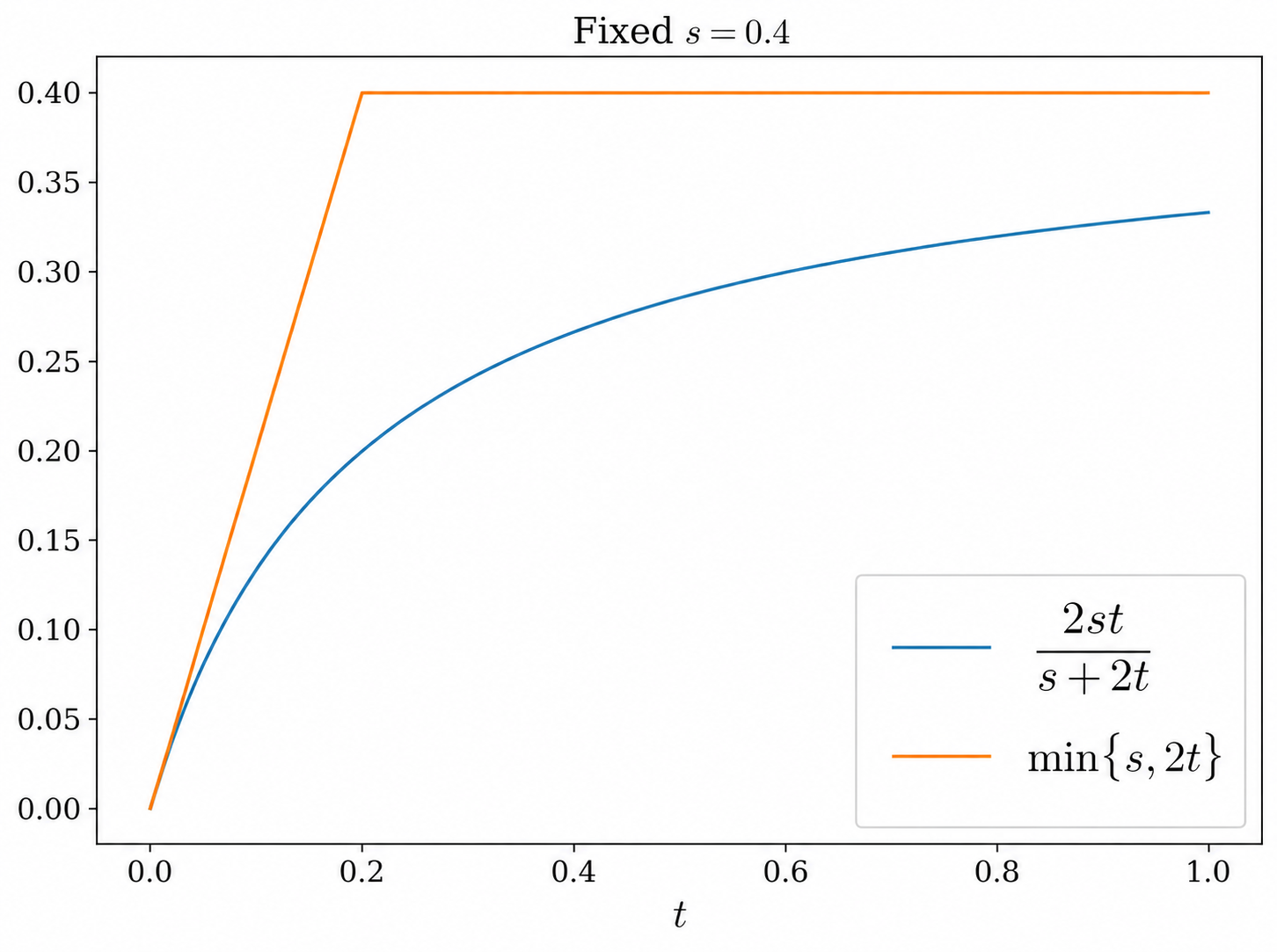}
		\caption*{Fixed $s=0.4$}
	\end{minipage}
	%\hspace{0.02\textwidth}
	\begin{minipage}{0.49\textwidth}
		\centering
		\includegraphics[width=\textwidth]{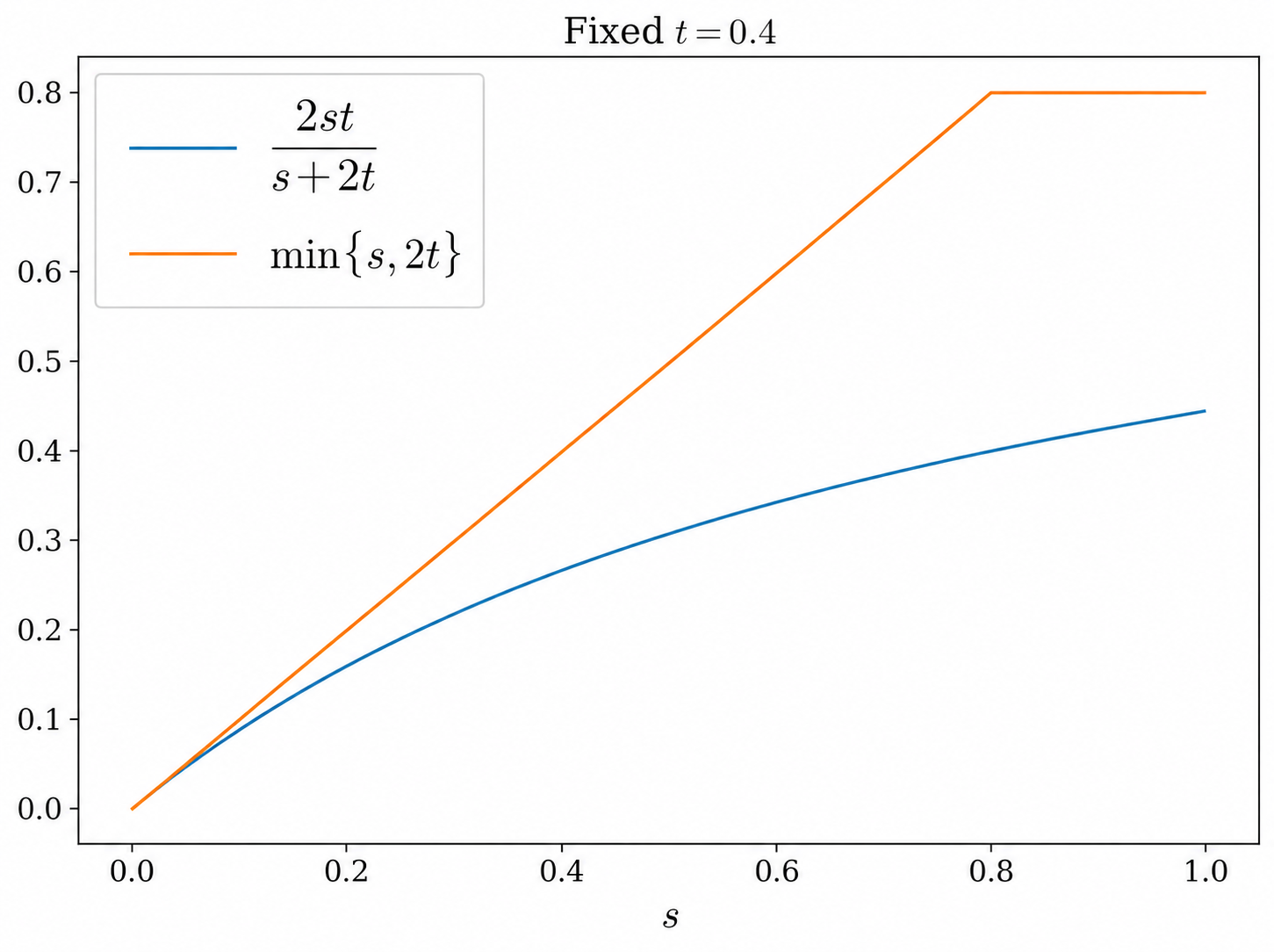}
		\caption*{Fixed $t=0.4$}
	\end{minipage}
	\caption{Comparison of the lower bound $\frac{2st}{s+2t}$ and the upper bound $\min\{s,2t\}$. The figures illustrate the asymptotic sharpness of the lower bound as one parameter tends to zero while the other is fixed.}
	\label{fig:asymptotic-bounds}
\end{figure}

\subsection{Main results:~Furstenberg  variants} \label{sec:furstenberg}

 We first define two variants of the $(s,t)$-Furstenberg problem by imposing different dimensional conditions on the collection of lines. 
 \begin{defn}
 	Let $s \in (0,1]$ and $t \in (0,2]$. A set $E_{s,t}$ is called an FF-$(s,t)$-Furstenberg set if there exists a bounded collection of lines $\Lambda \subseteq A(2,1)$ such that
\begin{equation}
	\four (\Lambda) \geq t, \label{fourierconditiont}
\end{equation}
and for each $\ell \in \Lambda$,
\begin{equation}
	\four (\ell \cap E_{s,t}) \geq s,\nonumber
\end{equation}
where $\ell \cap E_{s,t}$ is viewed as a subset of $\mathbb{R}$. Similarly, we define $\widetilde{E}_{s,t}$ to be an FH-$(s,t)$-Furstenberg set by replacing the Fourier dimension condition \eqref{fourierconditiont} with the Hausdorff dimension condition
\begin{equation}
	\haus (\Lambda) \geq t. \label{hausdorffconditiont}
\end{equation}

 \end{defn}

 Throughout this paper, all collections of lines under consideration and the corresponding sets $E_{s,t}$ and $\widetilde{E}_{s,t}$ are assumed to be bounded.
We define the corresponding    Fourier dimension thresholds by
\[
\Delta^{F,F}_{\mathcal{F}}(s,t) = \inf \{ \four E_{s,t} : E_{s,t} \text{ is an FF-$(s,t)$-Furstenberg set} \}
\]
and
\[
\Delta^{F,H}_{\mathcal{F}}(s,t) = \inf \{ \four \widetilde{E}_{s,t} : \widetilde{E}_{s,t} \text{ is an FH-$(s,t)$-Furstenberg set} \}.
\]
As such one may consider the estimation of $\Delta^{F,F}_{\mathcal{F}}(s,t) $ to be the `pure Fourier' problem  in the sense that all the notions of dimension used are the Fourier dimension.  In this language, the original Furstenberg set problem is the `pure Hausdorff' problem  and the results of Fraser in \cite{boxdimensionff} resolve the `pure box' and `pure packing' problems.  Our main results concerning Fourier analytic variants of the Furstenberg problem are as follows.

\begin{thmB} \label{thmb}
Let $s \in (0,1]$ and $t \in (0,2]$. Then, for the `pure Fourier' problem, 
\begin{equation}
	\frac{st}{s+t} \leq \Delta^{F,F}_{\mathcal{F}}(s,t) \leq \min\{s,2t\}.\nonumber
\end{equation}
Further, in the `mixed problem' we have
\begin{equation}
	\begin{aligned}
		\Delta^{F,H}_{\mathcal{F}}(s,t) &= 0, && \text{if } 0 < t \leq 1,\\
		\frac{2s(t-1)}{s+2(t-1)} \leq \Delta^{F,H}_{\mathcal{F}}(s,t) &\leq s, && \text{if } 1 < t \leq 2.
	\end{aligned}\nonumber
\end{equation}
\end{thmB}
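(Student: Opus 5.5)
Theorem B splits into four parts, which I plan to handle in the following order.

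\emph{Lower bound in the pure Fourier problem.} I would adapt the Kakeya lower bound argument of Theorem~\ref{thm1}. Since $\four \Lambda \geq t$, the line set (parametrised by $(a,b)$, say $\ell_{a,b}=\{y=ax+b\}$) carries a measure $\nu$ with $|\hat\nu(\eta)|\lesssim |\eta|^{-t'/2}$ for every $t'<t$. Each $\ell\cap E_{s,t}$ carries a measure $\mu_\ell$ with $|\hat\mu_\ell(r)|\lesssim |r|^{-s'/2}$. I would form $\mu=\int \mu_\ell\,d\nu(\ell)$, pushed onto $\mathbb{R}^2$, and estimate $\hat\mu(\xi)$. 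For a fixed frequency $\xi$, the inner Fourier transform along $\ell_{a,b}$ is $\hat\mu_\ell$ evaluated at the projection of $\xi$ onto the direction of $\ell$, times a phase $e^{-2\pi i b\xi_2}$.

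I would split the $\nu$-integral according to whether that projection is at least $|\xi|^{\alpha}$ or not.
\begin{itemize}
\item On the good part, the bound for $\hat\mu_\ell$ gives decay $|\xi|^{-\alpha s/2}$.
\item On the bad part, the directions lie in a set of measure about $|\xi|^{\alpha-1}$. Fourier decay of $\nu$ controls the $\nu$-mass of such a neighbourhood, via a smooth bump and Fourier inversion, by roughly $|\xi|^{-(1-\alpha)t/2}$.
\end{itemize}
Balancing $\alpha s=(1-\alpha)t$ gives $\alpha=t/(s+t)$ and the decay exponent $st/(s+t)\cdot\frac12$, so $\four \ge st/(s+t)$.

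The main obstacle is the measurability and uniformity of the family $\mu_\ell$. I would handle it by passing to a $\nu$-positive subset on which the implied constants are uniform and the assignment $\ell\mapsto\mu_\ell$ is measurable, as presumably done in Theorem~\ref{thm1}.

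\emph{Upper bounds.} For $\min\{s,2t\}$ in the FF case I would reuse Propositions~\ref{sexample} and~\ref{2texample}.
\begin{itemize}
\item The product example $K_1$ is a union of parallel lines indexed by a line set of positive measure, hence of Fourier dimension $2\ge t$. It is therefore an FF-$(s,t)$-Furstenberg set of Fourier dimension $s$.
\item For $2t$, I would take the $K_0$ construction with Salem direction set, and check that the line set through a common point, parametrised in $A(2,1)$, inherits Fourier dimension $t$ as a curve image. This needs $t\le1$; otherwise $2t\ge2$ is trivial.
\end{itemize}

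\emph{Mixed problem.} For $t\le1$, Hausdorff dimension of $\Lambda$ is cheap. I would take all lines through a fixed set in one direction family: for instance, horizontal lines $y=b$ with $b$ in a compact set $B$ of Hausdorff dimension $t$ but Fourier dimension $0$, with each $\ell\cap E$ an interval. Then $E=[0,1]\times B$ has $\hat{\mathbf 1}$ failing to decay along the $\xi_2$ axis, so $\four E=0$. For $1<t\le2$, the bound $\le s$ comes from $K_1$ again.

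For the lower bound when $1<t\le2$, I would reduce to Theorem~A. Lines with $\haus\Lambda\ge t>1$ have a direction set of Hausdorff dimension at least $t-1$, by Marstrand-type slicing, since each fibre of the direction map is one-dimensional. Choosing one line per direction, with unit subsegments (after rescaling to bounded sets) carrying the Fourier-dimension-$s$ pieces, yields an FH-$(s,t-1)$-Kakeya set inside $\widetilde E_{s,t}$. Theorem~A then gives $\four\widetilde E_{s,t}\ge 2s(t-1)/(s+2(t-1))$. The care needed here is a measurable selection and making $\ell\cap \widetilde E$ lie in a unit segment while keeping Fourier dimension $s$; the latter follows because Fourier dimension is stable under restricting a measure by a smooth cutoff and rescaling.
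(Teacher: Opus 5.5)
\textbf{Gap: the upper bound $2t$ in the pure Fourier problem.} The bush $K_0$ does not satisfy the FF hypothesis.

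\emph{Vertex at the origin.} In the slope--intercept coordinates you adopt, the lines through a fixed point $(x_0,y_0)$ are exactly $\{(a,b): b=y_0-ax_0\}$, a straight line in parameter space. In the paper's coordinates \eqref{linespara}, the lines through the origin (the vertex of $K_0$) form $\{a=0\}$. Any measure carried by a line in $\mathbb{R}^2$ has Fourier transform constant in the normal direction, so such a $\Lambda$ has Fourier dimension $0$, not $t$. Other lines do not rescue $K_0$ either. In the only relevant regime $2t<s$, a line avoiding the vertex meets $K_0$ in a Lipschitz image of a subset of the direction set. That intersection has Hausdorff, hence Fourier, dimension at most $t<s$.

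\emph{Vertex at $p\neq0$.} Using \eqref{linespara}, the line set becomes $\gamma(\Theta)$ with $\gamma(\theta)=(\theta,-p_1\sin\theta+p_2\cos\theta)$. Your claim that it ``inherits Fourier dimension $t$ as a curve image'' is unproved and not automatic. The phases $\xi\cdot\gamma$ have fold critical points, and Fourier decay need not transfer through them. For example, $|\theta-\theta_0|^{-1+t/2}\,d\theta$ (smoothly cut off) has decay $|\eta|^{-t/2}$. Yet its image on an arc with nonzero curvature at $\gamma(\theta_0)$ decays only like $|\xi|^{-t/4}$ in the normal direction there. So this step would need an independent construction of a Salem set of dimension $t$ on the arc, which the proposal does not provide.

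\textbf{What the paper does instead.} It avoids bushes altogether.
\begin{enumerate}
\item Lemma~\ref{generalffupperbound} shows that whenever $\lbd\Lambda=t<1$, any union $\bigcup_{\ell\in\Lambda}F_\ell$ with $F_\ell\subseteq\ell$ has Fourier dimension at most $2t$. Along some $\delta_n\to0$ the set is covered by $\lesssim\delta_n^{-(t+\varepsilon)}$ tubes of width $\delta_n$, so some tube has $\mu$-mass $\gtrsim\delta_n^{t+\varepsilon}$. On the other hand, decay $|\hat\mu(\xi)|\lesssim|\xi|^{-(t+2\varepsilon)}$ passes to the projection onto the line perpendicular to the tube, and Lemma~\ref{projective} then bounds that mass by $\lesssim\delta_n^{t+2\varepsilon}$, a contradiction.
\item Proposition~\ref{existenceofambda} produces a line set with $\four\Lambda=\lbd\Lambda=t$: a planar Brownian image of a self-similar set of dimension $t/2$, via Kahane's theorem and H\"older continuity.
\item Taking each $F_\ell$ to be a full segment gives an FF-$(1,t)$-set of Fourier dimension at most $2t$.
\end{enumerate}
This tube-counting bound, together with a Salem line set of matching lower box dimension, is the ingredient your plan is missing.

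\textbf{Smaller points.}
\begin{enumerate}
\item Your justification for $K_1$ is also off. Parallel lines lie on a line in parameter space, so they have Fourier dimension $0$ and Hausdorff dimension at most $1$. What actually makes $K_1$ an FF-$(s,2)$ and FH-$(s,2)$ set is that every line in an open set of slightly tilted, nearly horizontal lines meets $K_1$ in an affine copy of $X_1$.
\item The FF lower bound otherwise matches the paper. Do not replace the line measure by a fresh Fourier-decay measure on a subset, since Fourier dimension is not countably stable. Restricting the original line measure is fine, because only the Frostman bound on its direction marginal is used, and that bound is monotone. The paper instead rescales the fibre measures, $\nu_\ell'=\nu_\ell/c_\ell$, and keeps the line measure intact.
\item Your reduction of the mixed lower bound to Theorem~A via $\haus\pi(\Lambda)\geq\haus\Lambda-1$ is correct. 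It is a more modular version of the paper's direct strip estimate $\eta(S_\alpha)\lesssim\alpha^{t-1-\varepsilon}$, which is exactly the $(t-1-\varepsilon)$-Frostman property of the direction marginal.
\end{enumerate}
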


Thus, in contrast to the Hausdorff, box, and packing dimension variants, the Fourier dimension setting leads to a different family of thresholds. In particular, the FF and FH formulations exhibit different behaviour, reflecting the stronger instability and finer analytic nature of the Fourier dimension. We illustrate the lower and upper bounds for both $\Delta^{F,F}_{\mathcal{F}}(s,t)$ and $\Delta^{F,H}_{\mathcal{F}}(s,t)$ in Figure~\ref{fig:asymptotic-furstenberg}.

\begin{figure}[h]
	\centering
	\begin{minipage}{0.48\textwidth}
		\centering
		\includegraphics[width=\textwidth]{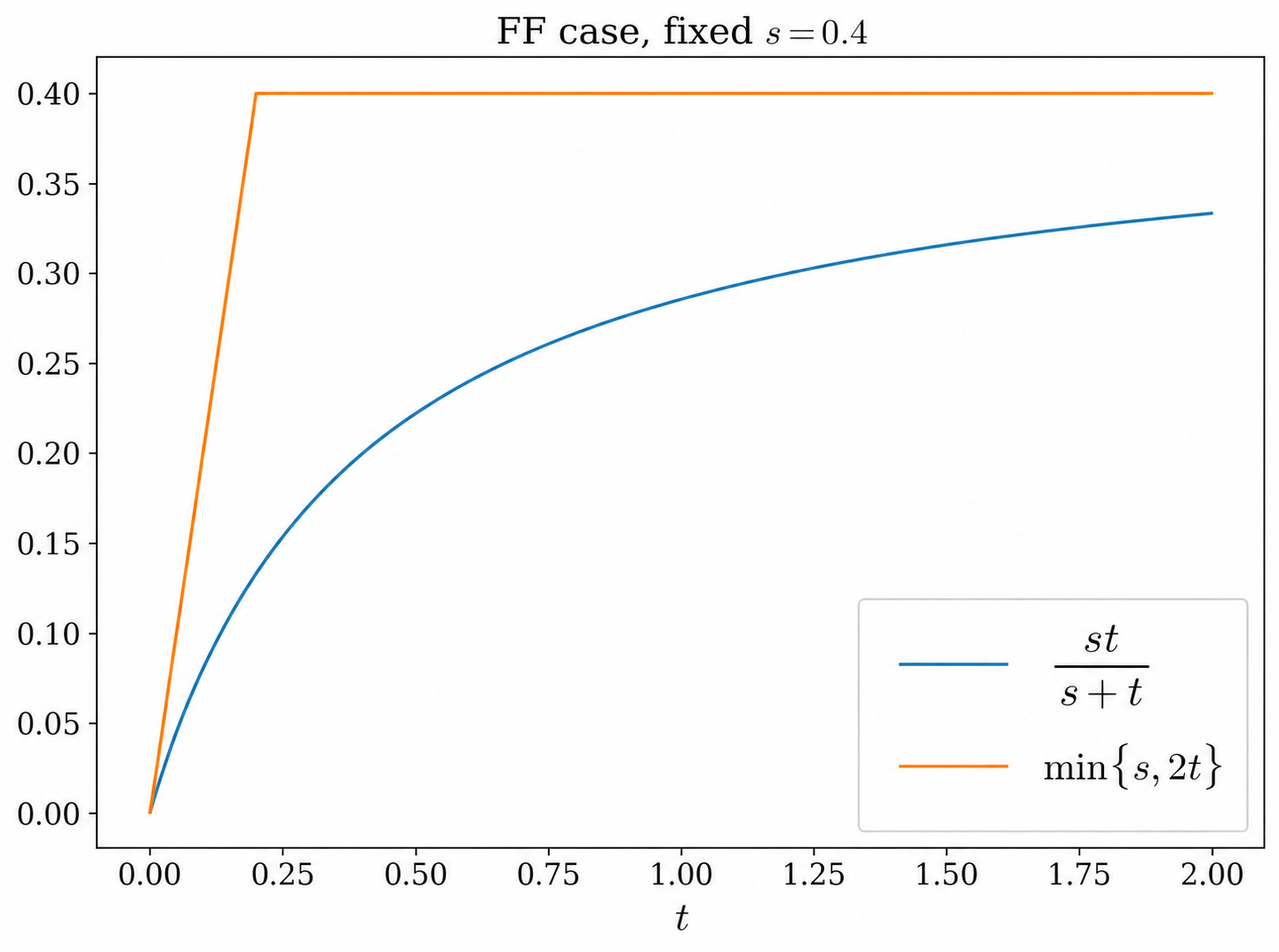}
	\end{minipage}
	\hspace{0.02\textwidth}
	\begin{minipage}{0.48\textwidth}
		\centering
		\includegraphics[width=\textwidth]{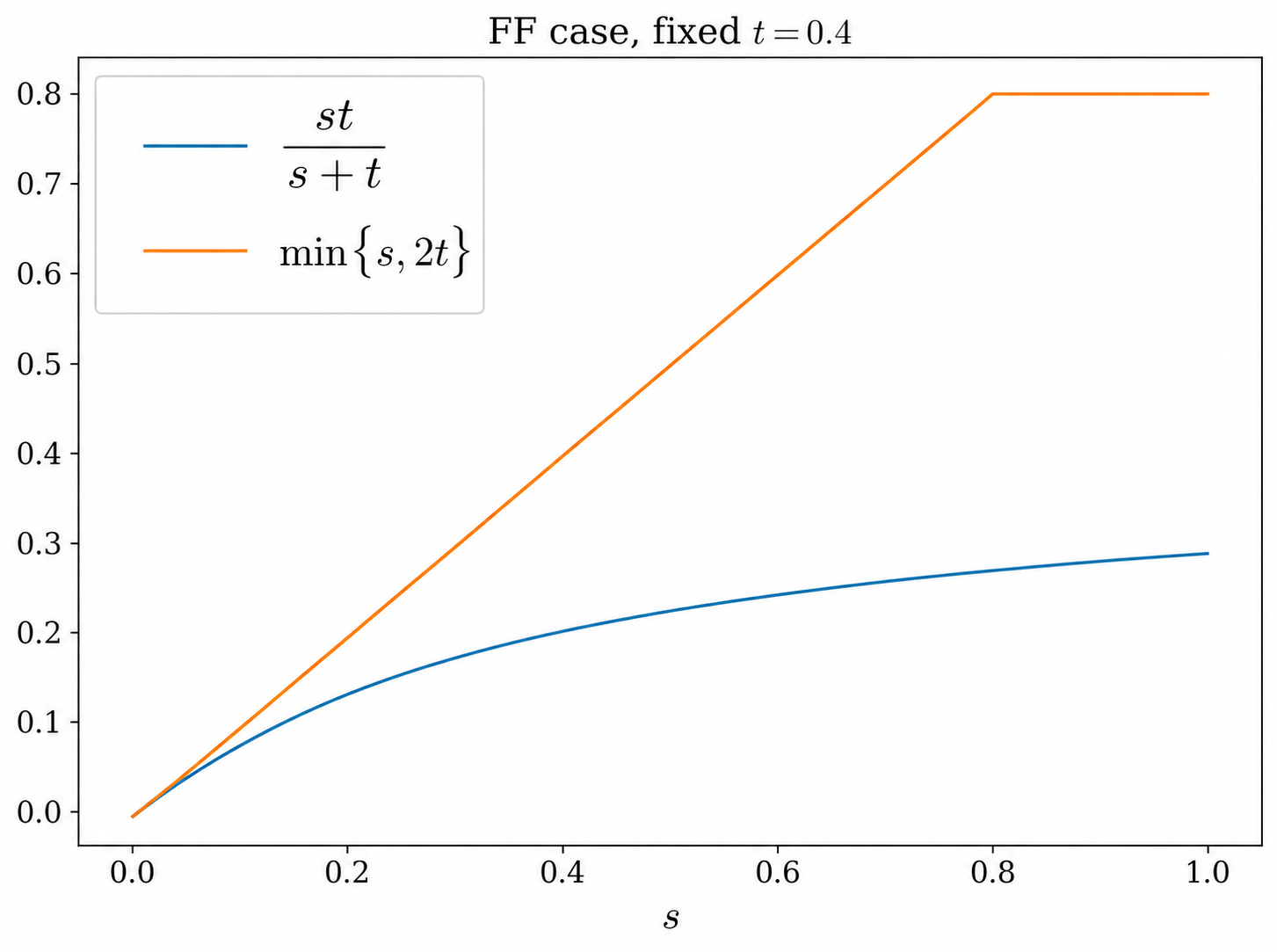}
	\end{minipage}

	\vspace{0.3cm}

	\begin{minipage}{0.48\textwidth}
		\centering
		\includegraphics[width=\textwidth]{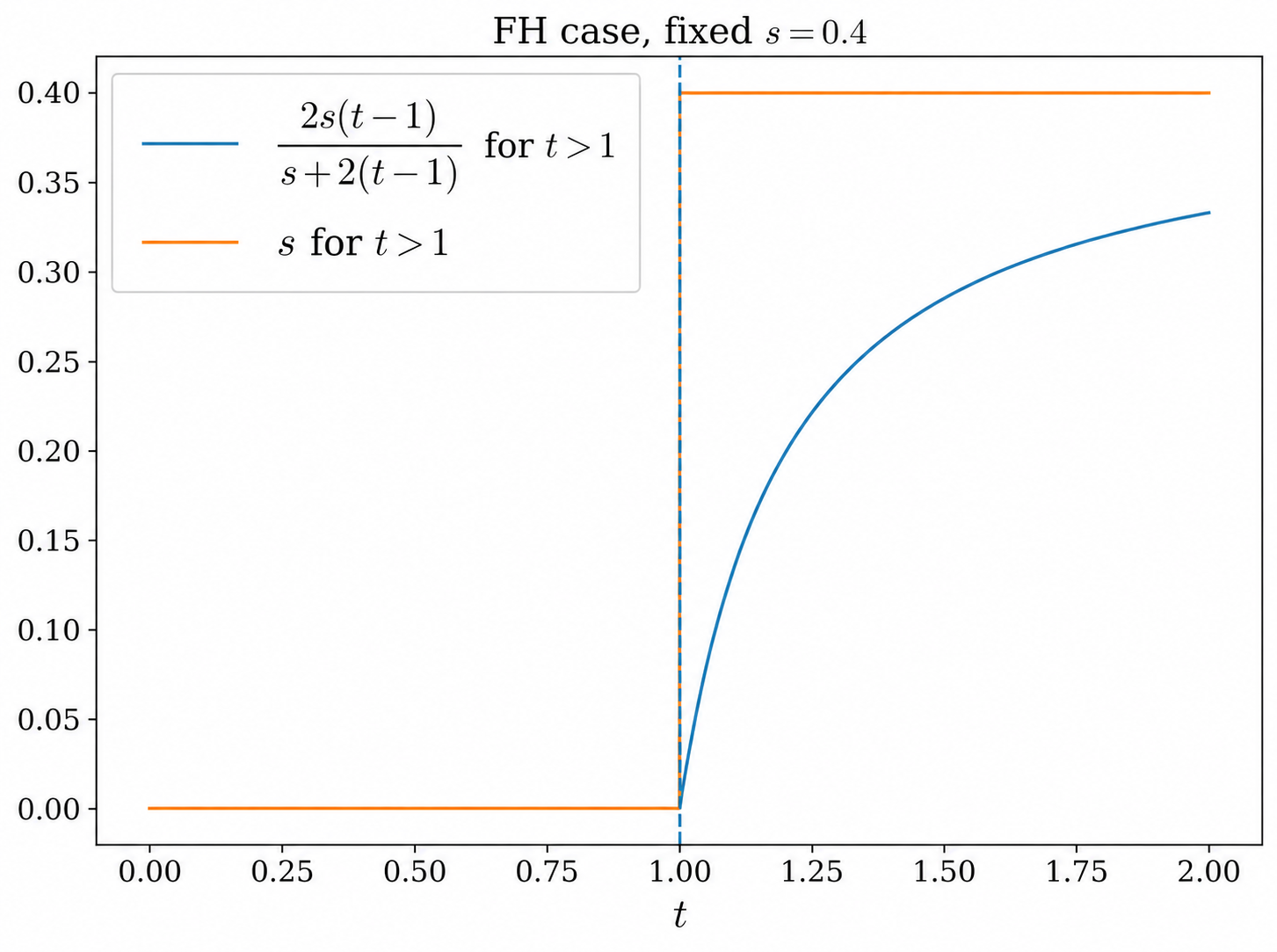}
	\end{minipage}
	\hspace{0.02\textwidth}
	\begin{minipage}{0.48\textwidth}
		\centering
		\includegraphics[width=\textwidth]{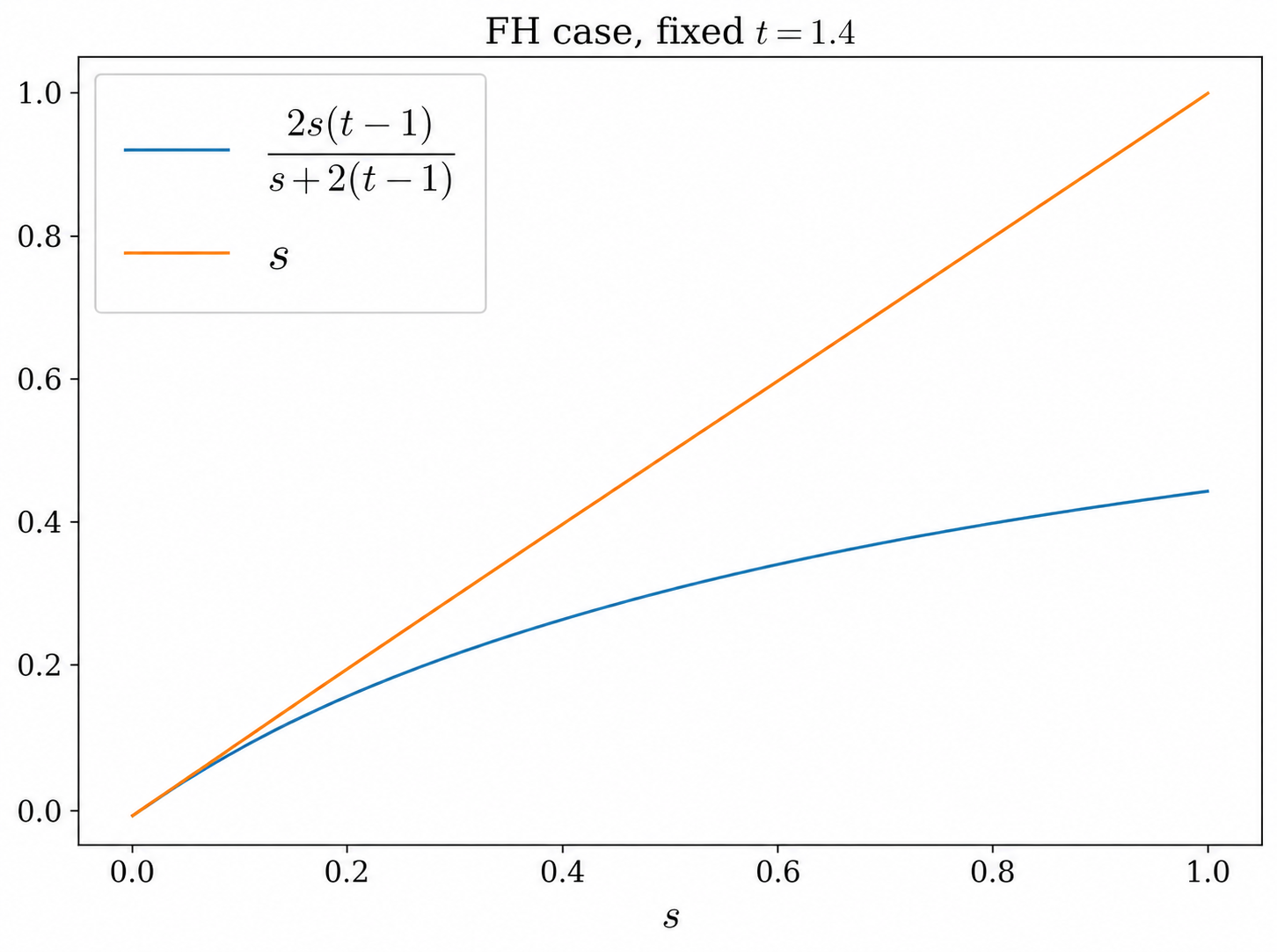}
	\end{minipage}

	\caption{Comparison of the lower and upper bounds for $\Delta^{F,F}_{\mathcal{F}}(s,t)$ and $\Delta^{F,H}_{\mathcal{F}}(s,t)$.}
	\label{fig:asymptotic-furstenberg}
\end{figure}
We also examine the asymptotic behavior of the lower and upper bounds in Theorem~B. In the FF case, the bounds are
\[
\frac{st}{s+t}
\qquad\text{and}\qquad
\min\{s,2t\}.
\]
For fixed $s$, as $t \to 0$, we have
\[
\frac{st}{s+t} \sim t,
\qquad
\min\{s,2t\} = 2t,
\]
so the lower bound differs from the upper bound by a factor of $2$ in the limit. On the other hand, for fixed $t$, as $s \to 0$, we have
\[
\frac{st}{s+t} \sim s = \min\{s,2t\},
\]
showing that the lower bound is asymptotically sharp in this regime.

In the FH case, for $t>1$, the bounds are
\[
\frac{2s(t-1)}{s+2(t-1)}
\qquad\text{and}\qquad
s.
\]
For fixed $s$, as $t \to 1^+$, we have
\[
\frac{2s(t-1)}{s+2(t-1)} \sim 2(t-1),
\]
while the upper bound remains equal to $s$. Thus the lower bound tends to $0$ as $t \downarrow 1$, whereas the upper bound stays fixed. Finally, for fixed $t>1$, as $s \to 0$, we have
\[
\frac{2s(t-1)}{s+2(t-1)} \sim s,
\]
and hence the lower bound is again asymptotically sharp in this regime.
The lower bounds are proved in Proposition~\ref{furstenbergthm}, while the upper bounds are obtained by explicit constructions in the final part of the paper. In particular, for the FF case, we derive an upper bound from a condition on the upper box dimension of the collection of lines in Lemma~\ref{generalffupperbound}, and then show the existence of such an example in Proposition~\ref{existenceofambda}.

Naturally, we are interested in the precise formulae for the dimension thresholds
\[
\Delta^{F,H}_{\mathcal{K}}(s,t), \qquad \Delta^{F,F}_{\mathcal{K}}(s,t), \qquad \Delta^{F,H}_{\mathcal{F}}(s,t), \qquad \Delta^{F,F}_{\mathcal{F}}(s,t).
\]
We propose their precise calculations as problems for future consideration, but at this time are not prepared to make any precise conjectures.

\subsection{Dimension theory, definitions, and some notation}

 Here and throughout we use the notation $\lesssim$ to denote inequalities up to an absolute constant. More precisely, for nonnegative quantities $A$ and $B$, we write $A \lesssim B$ if $A \leq C B$ for some constant $C>0$ independent of all relevant parameters. If the constant depends on a parameter $n$, we write $A \lesssim_n B$.  We now recall the definitions of the main notions of fractal dimension we use in this paper.  

\begin{defn} \label{dimensionsdef}
Let $X \subseteq \mathbb{R}^d$ be a bounded set.
\begin{enumerate}
	\item Let $\mathcal{M}(X)$ be the set of finite Borel measures supported on $X$. The \textit{Fourier dimension} of $X$ is defined by
	\begin{equation}
		\four X = \sup \left\{ 0 \leq s \leq d : \exists \mu \in \mathcal{M}(X)\text{ such that } |\widehat{\mu}(z)| \lesssim |z|^{-s/2} \right\}. \nonumber
	\end{equation}

	\item The \textit{Hausdorff dimension} of $X$ is defined by
	\begin{equation}
		\haus X
		=
		\inf\left\{
		\alpha \geq 0 :
		\forall \varepsilon > 0,\,
		\exists \{X_i\}_{i=1}^\infty
		\text{ such that }
		X \subseteq \bigcup_{i=1}^\infty X_i
		\text{ and }
		\sum_{i=1}^\infty \mathrm{diam}(X_i)^\alpha < \varepsilon
		\right\}. \nonumber
	\end{equation}

	\item For any $\delta > 0$, let $N_\delta(X)$ denote the smallest number of sets with diameter at most $\delta$ needed to cover $X$. The \textit{lower box-counting dimension} of $X$ is defined by
	\[
		\lbox X = \liminf_{\delta \to 0} \frac{\log N_\delta(X)}{-\log \delta}.
	\]

	\item The \textit{upper box-counting dimension} of $X$ is defined by
	\[
		\ubox X = \limsup_{\delta \to 0} \frac{\log N_\delta(X)}{-\log \delta}.
	\]
\end{enumerate}
\end{defn}The Fourier dimension enjoys only very limited stability properties. It is monotone with respect to set inclusion: if $A \subseteq B \subseteq \mathbb{R}^d$, then
\[
\four A \leq \four B.
\]
However, unlike the Hausdorff dimension, the Fourier dimension does not satisfy finite stability. In particular, it may happen that
\[
\four(A \cup B) > \max\{ \four A, \four B \},
\]
see \cite[Example~2]{badexample}. By contrast, the Hausdorff dimension is countably stable: for any countable collection of sets $\{A_j\}_{j=1}^{\infty} \subseteq \mathbb{R}^d$,
\[
\haus\!\left( \bigcup_{j=1}^{\infty} A_j \right)
= \sup_{j \geq 1} \haus A_j.
\]
As a result, the Fourier dimension behaves poorly under finite unions and must be treated with care in geometric constructions, such as those in this paper. 
The Fourier dimension is bounded above by the Hausdorff dimension. If $X \subseteq \mathbb{R}^d$ satisfies $\four X = \haus X$, then we call $X$ a Salem set. Fraser and Hambrook \cite{saleminRn} construct nontrivial Salem sets in $\RR^d$ with arbitrary dimensions. In particular, all Kakeya sets in $\mathbb{R}^2$ are Salem sets of Hausdorff and Fourier dimension $2$; see \cite{Oberlin}. More details and properties of the Fourier dimension can be found in \cite{M15}.

Next we fix some standard notation. We write  $\mathcal{S}(\mathbb{R}^d)$ to denote  the Schwartz space of rapidly decaying smooth functions on $\mathbb{R}^d$, and $B(x,r)$ denotes the closed Euclidean ball centered at $x$ with radius $r$.

We also recall the definition of the pushforward of a measure. Let $X$ and $Y$ be measurable spaces, and let $T : X \to Y$ be a measurable map. Given a Borel measure $\mu$ on $X$, the pushforward $T_{\#}\mu$ is the Borel measure on $Y$ defined by
\[
T_{\#}\mu(B) = \mu(T^{-1}(B))
\]
for every Borel set $B \subseteq Y$. Equivalently,
\[
\int_Y f(y)\, d(T_{\#}\mu)(y) = \int_X f(T(x))\, d\mu(x)
\]
for all bounded Borel measurable functions $f$ on $Y$.

 In the discussion of the Furstenberg problem, we always assume that the collection of lines $\Lambda \subseteq A(2,1)$ is bounded when viewed as a subset of $[0,\pi) \times \mathbb{R} \subseteq \mathbb{R}^2$. We parameterize $A(2,1)$ (and thus $\Lambda \subseteq A(2,1)$)    as follows. For each line $\ell \in A(2,1)$, let $e_{\ell}$ denote the direction of $\ell$ and let $a_{\ell}$ denote its translation parameter. More precisely, if $\ell$ corresponds to the pair $(\theta,a)$, then
\begin{equation} \label{linespara}
\ell = \{x \in \mathbb{R}^2 : x \cdot (-\sin\theta,\cos\theta) = a\},
\end{equation}
where $\theta \in [0,\pi)$ determines the direction $e_{\ell}$ and $a = a_{\ell}$ is the translation parameter.  In this way, we may discuss the Fourier dimension of $\Lambda$ by viewing it as a subset of the Euclidean plane.

\section{Proofs}

\subsection{Kakeya variants:~proof of Theorem~A}

For the lower bound in Theorem~A, we follow Oberlin's method \cite[Proposition~2]{Oberlin} to estimate the Fourier decay of a measure supported on $K$. In the argument below, we view each subset $L_e$ of a unit line segment in $\mathbb{R}^2$ as a subset of $\mathbb{R}$. Although the Fourier dimension of $L_e$ as a subset of $\mathbb{R}^2$ is zero, we do not distinguish between $L_e$ and $R_e$ in Definition~\ref{ksdef}, and thus write $\four L_e = \four R_e \geq s$.

\begin{thm}\label{thm1}
Let $K$ be an $(s,t)$-Kakeya set in $\mathbb{R}^2$. Then
\begin{equation}
	\frac{2st}{s+2t} \leq \four K.\label{lowerb}
\end{equation}
\end{thm}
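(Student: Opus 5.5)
Following Oberlin's method \cite[Proposition~2]{Oberlin}, the plan is to build a single measure $\mu$ on $K$ and bound $|\widehat{\mu}(\xi)|$ directly. Fix $0<\sigma<s$ and $0<\tau<t$. By Frostman's lemma there is a probability measure $\nu$ on $E$ with $\nu(B(e,r))\lesssim r^{\tau}$. For each $e\in E$ choose a probability measure $\mu_e$ on $R_e$ with $|\widehat{\mu_e}(u)|\lesssim |u|^{-\sigma/2}$. Then define
\[
\int f\,d\mu=\int_E\int f(a_e+re)\,d\mu_e(r)\,d\nu(e).
\]
The uniformity of the implicit constants in $e$, and the measurability of $e\mapsto(a_e,\mu_e)$, will be arranged by restricting $E$. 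By countable stability of Hausdorff dimension, some subset $E'\subseteq E$ with $\haus E'>\tau$ has a uniform decay constant $C$. I then assume a measurable selection, as is implicitly done in Oberlin's argument. The measure $\mu$ is supported on $\overline{K}$, and I would take $K$ closed or argue with compact pieces.

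Next I compute the transform. We have $\widehat{\mu}(\xi)=\int_E e^{-2\pi i a_e\cdot\xi}\widehat{\mu_e}(\xi\cdot e)\,d\nu(e)$, so
\[
|\widehat{\mu}(\xi)|\lesssim\int_E \min\{1,|\xi\cdot e|^{-\sigma/2}\}\,d\nu(e).
\]
Write $\xi=|\xi|\omega$. The directions $e$ with $|\xi\cdot e|\le\lambda$ lie within angle $\lesssim\lambda/|\xi|$ of $\pm\omega^{\perp}$, so their $\nu$-mass is at most $(\lambda/|\xi|)^{\tau}$. Fix a threshold $\delta\in(0,1)$ and split $E$ by whether $|e\cdot\omega|\le\delta$ or not. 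The near-perpendicular part contributes at most $\nu(\{|e\cdot\omega|\le\delta\})\lesssim\delta^{\tau}$. The remaining part contributes at most $(\delta|\xi|)^{-\sigma/2}$. Hence
\[
|\widehat{\mu}(\xi)|\lesssim \delta^{\tau}+(\delta|\xi|)^{-\sigma/2}.
\]

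Now I optimise over $\delta$. Choosing $\delta=|\xi|^{-\alpha}$ balances the two terms when $\alpha\tau=(1-\alpha)\sigma/2$. This gives $\alpha=\sigma/(\sigma+2\tau)$ and hence decay exponent $\tau\sigma/(\sigma+2\tau)$. In Fourier dimension terms this is $|\widehat\mu(\xi)|\lesssim|\xi|^{-D/2}$ with $D=2\sigma\tau/(\sigma+2\tau)$. Letting $\sigma\uparrow s$ and $\tau\uparrow t$ yields \eqref{lowerb}.

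A dyadic decomposition of $|\xi\cdot e|$ might seem to improve this, but it does not. The sum $\sum_k 2^{-k\sigma/2}(2^k/|\xi|)^{\tau}$ is dominated by its endpoint terms, so it reproduces the same exponent; the simple two-piece split is sufficient. The main obstacle is thus not the estimate but the construction of $\mu$. It requires a measurable choice of $a_e$ and $\mu_e$, uniform Fourier decay constants, and handling the fact that $\four$ only provides decay for exponents below $s$. I would handle uniformity by the countable pigeonholing above over constants $C\in\mathbb N$. I would handle measurability by passing to a compact subset of $E'$ of positive $\mathcal{H}^{\tau}$-measure on which the selection is Borel (via Lusin-type arguments), or by simply assuming it as in \cite{Oberlin}.
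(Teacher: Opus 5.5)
Your proof of the stated inequality is correct and is essentially the paper's argument. You pigeonhole the directions into pieces with a uniform Fourier decay constant (the paper's $E_n$) and take a Frostman measure on a piece of dimension close to $t$. You then fibre the line measures over it, bound $|\widehat{\mu}(\xi)|$ by $\int\min\{1,|e\cdot\xi|^{-\sigma/2}\}\,d\nu(e)$, split at $|e\cdot\omega|=\delta$, and balance the two terms.

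However, your closing remark that a dyadic decomposition cannot improve the exponent is false. The sum
\[
|\xi|^{-\tau}\sum_{0\le k\le \log_2|\xi|} 2^{k(\tau-\sigma/2)}
\]
is indeed governed by its endpoint terms. But these are $|\xi|^{-\tau}$ (at $k=0$; the piece $|e\cdot\xi|\le 1$ contributes the same) and $|\xi|^{-\sigma/2}$ (at $2^k\approx|\xi|$), not the balanced quantity. So in fact
\[
|\widehat{\mu}(\xi)|\lesssim |\xi|^{-\tau}+|\xi|^{-\sigma/2},
\]
with an extra factor $\log|\xi|$ when $\tau=\sigma/2$. Since $\frac{\sigma\tau}{\sigma+2\tau}=\bigl(\frac{1}{\tau}+\frac{2}{\sigma}\bigr)^{-1}<\min\{\tau,\sigma/2\}$, this is strictly stronger than the two-piece bound.

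The two-piece split loses because it charges every direction with $|e\cdot\omega|>\delta$ the worst value $(\delta|\xi|)^{-\sigma/2}$. Yet the Frostman condition makes the $\nu$-mass of $\{|e\cdot\omega|\approx 2^{-j}\}$ decay like $2^{-j\tau}$, so that bad value is attained only on a small set. Carried through with $\sigma\uparrow s$ and $\tau\uparrow t$, the same measure gives $\four K\geq\min\{s,2t\}$, which coincides with the upper bound in Theorem A. This does not invalidate your proof of the stated inequality, but the remark as written is wrong and should be removed or reversed.
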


\begin{proof}
Let $E \subseteq S^1$ be the direction set of $K$. For each $e \in E$, let
\[
L_e = \{ a_e + r e : r \in R_e \}
\]
be the corresponding subset of a unit line segment contained in $K$, as in Definition~\ref{ksdef}. Since $K$ is an $(s,t)$-Kakeya set, we have $\haus E \geq t$ and $\four L_e \geq s$ for all $e \in E$.

Fix $\varepsilon > 0$. For each $e \in E$, there exists a measure $\nu_e$ supported on $R_e$ such that
\begin{equation}
	|\widehat{\nu_e}(\xi)| \leq c_e |\xi|^{-s/2+\varepsilon}\nonumber
\end{equation}
for all $\xi \in \mathbb{R}$, where $c_e$ depends only on $e$.

Define
\begin{equation}
	E_n = \left\{ e \in E : |\widehat{\nu_e}(\xi)| \leq n |\xi|^{-s/2+\varepsilon} \text{ for all } \xi \in \mathbb{R} \right\}.\nonumber
\end{equation}
Let $t_n = \haus E_n$, and we pick $n$ large enough so that $t_n-\varepsilon>0$. Then, by Frostman's lemma,  there exists a measure $\mu_n$ supported on $E_n$ such that
\begin{equation}
	\mu_n(B(x,r)) \lesssim_n r^{t_n - \varepsilon}.\nonumber
\end{equation}

Let $K_n \subseteq K$ be the union of $L_e$ for $e \in E_n$. Using $\nu_e$ and $\mu_n$, define a measure $\eta_n$ supported on $K_n$ by
\begin{equation} \label{measureeta}
	\int f(x) \, d \eta_n(x)
	= \int_{E_n} \int_{R_e} f(a_e + r e) \, d \nu_e(r) \, d \mu_n(e),
\end{equation}
and applying the Riesz representation theorem. This construction requires that  the map $e \mapsto a_e$ is $\mu_n$-measurable, which we assume without loss of generality. If it is not measurable, then a standard tick can be used to recover the proof. Briefly, decompose $E_n$ into finitely many sub-arcs $J_i \subseteq S^1$,
\[
E_n = \bigcup_{i=1}^N (J_i \cap E_n),
\]
and defining
\[
\mu_{n,N} = \sum_{i=1}^N \mu_n(J_i)\delta_{e_i}
\]
for some $e_i \in J_i$. Let $\eta_{n,N}$ be defined analogously to \eqref{measureeta}. Then $\eta_n$ can be obtained as a weak-$*$ limit of $\{ \eta_{n,N} \}_{N \to \infty}$; see \cite[Proposition~2]{Oberlin} for more details.

The Fourier transform of the measure $\eta_n$ satisfies
\begin{equation}
\begin{aligned}
|\widehat{\eta_n}(\xi)|
&= \left| \int_{E_n} \int_{R_e} e^{-2\pi i (a_e + r e)\cdot \xi} \, d\nu_e(r) \, d\mu_n(e) \right| \\
&\leq \int_{E_n} \left| \int_{R_e} e^{-2\pi i r e \cdot \xi} \, d\nu_e(r) \right| d\mu_n(e) \\
&= \int_{E_n} |\widehat{\nu_e}(e \cdot \xi)| \, d\mu_n(e).
\end{aligned}\nonumber
\end{equation}
Define
\begin{equation}
	D_{\alpha} = \left\{ e \in E_n : \frac{|e \cdot \xi|}{|\xi|} \leq \alpha \right\}\nonumber
\end{equation}
and  split the above integral to obtain
\begin{equation}
|\widehat{\eta_n}(\xi)|
\leq \int_{D_{\alpha}} |\widehat{\nu_e}(e \cdot \xi)| \, d\mu_n(e)
 + \int_{E_n \setminus D_{\alpha}} |\widehat{\nu_e}(e \cdot \xi)| \, d\mu_n(e).\nonumber
\end{equation}
By the geometry of the unit circle, $D_{\alpha}$ is contained in two arcs of $S^1$, each of diameter comparable to $\alpha$, and hence
\begin{equation}
	\mu_n(D_{\alpha}) \lesssim_n \alpha^{t_n - \varepsilon}.\nonumber
\end{equation}
For $e \in E_n \setminus D_{\alpha}$, we have $|e \cdot \xi| > \alpha |\xi|$, and therefore
\begin{equation}
	|\widehat{\nu_e}(e \cdot \xi)|
	\lesssim_n |e \cdot \xi|^{-s/2+\varepsilon}
	\leq (\alpha |\xi|)^{-s/2+\varepsilon}.\nonumber
\end{equation}
Combining these estimates, we obtain
\begin{equation}
\begin{aligned}
|\widehat{\eta_n}(\xi)|
&\lesssim_n \alpha^{t_n-\varepsilon} + (\alpha |\xi|)^{-s/2+\varepsilon}.\nonumber
\end{aligned}
\end{equation}
Balancing the two terms by choosing \[
\alpha = |\xi|^{\frac{-s/2+\varepsilon}{t_n+s/2-2 \varepsilon}}
\]
gives
\begin{equation}
	|\widehat{\eta_n}(\xi)|
	\lesssim_n |\xi|^{\frac{(-s/2+\varepsilon)(t_n-\varepsilon)}{t_n+s/2-2 \varepsilon}}.\nonumber
\end{equation}
Therefore, the Fourier dimension of $K_n$ satisfies
\begin{equation}
	\four K_n \geq \frac{2(s/2-\varepsilon)(t_n-\varepsilon)}{t_n+s/2-2 \varepsilon}.\nonumber
\end{equation}
Since $\bigcup_n K_n = K$ and $t_n = \haus E_n$ satisfies $\lim_{n\to\infty} t_n = t$, letting $\varepsilon \to 0$ yields
\begin{equation}
	\four K \geq \frac{2st}{s+2t}, \nonumber
\end{equation}
as required.
\end{proof}

For the upper bound in Theorem~A, we construct two examples of $(s,t)$-Kakeya sets whose Fourier dimensions are bounded above by $s$ and $2t$, respectively. The first example is an $(s,1)$-Kakeya set.

\begin{prop}\label{sexample}
Let $X_1 \subseteq [0,1]$ be a compact set with $\four X_1 = s$. Then the set $K_1 = X_1 \times [0,1]$ is an $(s,1)$-Kakeya set in $\mathbb{R}^2$ with Fourier dimension
\begin{equation}
	\four K_1 = s.\nonumber
\end{equation}
\end{prop}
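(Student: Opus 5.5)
The plan is to check the Kakeya property directly and then prove the two inequalities $\four K_1 \leq s$ and $\four K_1 \geq s$ separately: the first by projecting onto the first coordinate, the second with an explicit product measure.

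\emph{Kakeya property.} Since $\four X_1 = s$, for each $\varepsilon>0$ there is a measure $\mu$ on $X_1$ with $|\widehat{\mu}(\xi)| \lesssim |\xi|^{-s/2+\varepsilon}$. Fix a point $p \in \operatorname{supp}\mu$ and a smooth bump $\phi$ supported in $(p-\delta,p+\delta)$ with $\phi(p)\neq 0$, so that $\phi\mu \neq 0$. Then $\widehat{\phi\mu} = \widehat{\phi} * \widehat{\mu}$ still decays like $|\xi|^{-s/2+\varepsilon}$. This is the standard fact that multiplying by a Schwartz function preserves polynomial Fourier decay: split the convolution into the region $|\eta| \leq |\xi|/2$ and its complement, and use the rapid decay of $\widehat{\phi}$ on the complement. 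Hence
\[
\four\big(X_1 \cap [p-\delta,p+\delta]\big) = s .
\]
Now take the directions $e = (\cos\theta, \sin\theta)$ with $\theta \in [0,\theta_0]$, for $\theta_0>0$ small; this is an arc of Hausdorff dimension $1$. Put $a_e = (p-\delta, 0)$ and
\[
R_e = \big\{ (x - p + \delta)/\cos\theta : x \in X_1 \cap [p-\delta, p+\delta] \big\}.
\]
If $2\delta < \cos\theta_0$, then $R_e \subseteq [0,1]$. For $r \in R_e$ the $y$-coordinate of $a_e + re$ is $r\sin\theta \in [0,1]$, so $L_e \subseteq K_1$. Finally, $R_e$ is an affine image of $X_1\cap[p-\delta,p+\delta]$, and Fourier dimension is invariant under nondegenerate affine maps of $\mathbb{R}$ (since $\widehat{T_\#\mu}(\xi)$ is a unimodular factor times $\widehat{\mu}(c\xi)$). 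Hence $\four R_e \geq s$ for every $e$ in the arc, and $K_1$ is an $(s,1)$-Kakeya set.

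\emph{Upper bound.} Let $\nu$ be any measure on $K_1$ with $|\widehat{\nu}(z)| \lesssim |z|^{-\alpha/2}$, and let $\pi(x,y)=x$. Then $\pi_\#\nu$ is supported on $X_1$, and $\widehat{\pi_\#\nu}(\xi) = \widehat{\nu}(\xi,0)$, so $\pi_\#\nu$ decays at rate $|\xi|^{-\alpha/2}$. Hence $\alpha \leq \four X_1 = s$.

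\emph{Lower bound.} Take $\mu$ as above and a smooth bump $\psi \geq 0$ supported in $[0,1]$, and set $\eta = \mu \times \psi(y)\,dy$, which is supported on $K_1$. Its transform factorises as $\widehat{\eta}(\xi_1,\xi_2) = \widehat{\mu}(\xi_1)\widehat{\psi}(\xi_2)$.
\begin{itemize}
\item If $|\xi_2| \geq |\xi|/2$, the Schwartz decay of $\widehat{\psi}$ gives $|\widehat{\eta}(\xi)| \lesssim_N |\xi|^{-N}$ for every $N$.
\item Otherwise $|\xi_1| \gtrsim |\xi|$, and $|\widehat{\eta}(\xi)| \lesssim |\xi|^{-s/2+\varepsilon}$.
\end{itemize}
Letting $\varepsilon\to0$ gives $\four K_1 \geq s$.

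The main obstacle I expect is the minor technical point in the Kakeya step. The naive choice $a_e = 0$ produces parameters $x/\cos\theta$ that can exceed $1$, and restricting $X_1$ to a sub-interval could in principle lower the Fourier dimension. The localisation by a smooth bump around a point of $\operatorname{supp}\mu$ resolves both issues.
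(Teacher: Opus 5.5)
Your route differs from the paper's. The paper gets $\four K_1=s$ in one line from a product formula in the literature, $\four(X_1\times[0,1])=\min\{s,\infty,2\}$. It handles the Kakeya property by saying each nearly horizontal line meets $K_1$ in ``a copy of $X_1$''. You prove both inequalities by hand. For the upper bound you project, using $\widehat{\pi_\#\nu}(\xi)=\widehat{\nu}(\xi,0)$. For the lower bound you use the product measure $\mu\times\psi(y)\,dy$ with the split $|\xi_2|\gtrless|\xi|/2$. This is exactly the content of the product formula in this case, so your argument is self-contained. Your Kakeya step is also more careful than the paper's. From the origin, a line at angle $\theta$ meets $K_1$ in the parameter set $X_1/\cos\theta$, which need not lie in $[0,1]$. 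Your localisation to a window of length $2\delta<\cos\theta_0$ is what actually makes $R_e\subseteq[0,1]$.

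That localisation has a quantifier slip, though. Your $p$ lies in $\operatorname{supp}\mu$, and $\mu=\mu_\varepsilon$ depends on $\varepsilon$. So you have only shown $\four(X_1\cap[p_\varepsilon-\delta,p_\varepsilon+\delta])\geq s-2\varepsilon$ for an interval that moves with $\varepsilon$. This does not produce one set $R_e$ with $\four R_e\geq s$, which the definition requires. The repair is short:
\begin{itemize}
\item take $\varepsilon_k=1/k$ and bumps $\phi_k\geq0$ supported in $(p_k-\delta/2,p_k+\delta/2)$;
\item pass to a subsequence $p_k\to p^*$;
\item observe that for large $k$ the measures $\phi_k\mu_k$ all live on $X_1\cap[p^*-\delta,p^*+\delta]$, which therefore has Fourier dimension at least $s$.
\end{itemize}
A fixed finite partition of unity plus pigeonhole works equally well. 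You also need $\phi\geq0$ for $\phi\mu$ to be a measure.

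Second, the step ``$\alpha\leq\four X_1=s$'' in your upper bound uses more than the definition gives. Since $\four$ is capped at the ambient dimension, the projection only yields $\four X_1\geq\min\{\alpha,1\}$, so you get $\alpha\leq s$ only when $s<1$. This is not just a weakness of your method: at $s=1$ the proposition is false as stated. Indeed $X_1=[0,1]$ gives $K_1=[0,1]^2$ with $\four K_1=2$. The paper's $\min\{s,\infty,2\}$ carries the same hidden restriction, since it needs $s$ to be the uncapped Fourier dimension of $X_1$.

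So your proof is complete for $s<1$ after the fix above. For $s=1$ you need an extra hypothesis, such as $X_1$ being Lebesgue-null. Then $\alpha>1$ would put $\widehat{\pi_\#\nu}$ in $L^2$ and force the nonzero measure $\pi_\#\nu$ to be absolutely continuous while supported on a null set, which is impossible.
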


\begin{proof}
Since $K_1$ is the product of two compact subsets of $\mathbb{R}$, we may apply \cite[Theorem~3.2]{product} to get
\begin{equation}
	\four K_1 = \min\{s,\infty,2\} = s.\nonumber
\end{equation}
To see that $K_1$ is an $(s,1)$-Kakeya set, choose a family of line segments whose directions vary over a small interval corresponding to lines nearly parallel to the x-axis. This direction set has Fourier dimension $1$ (it is open in the parameter space), and for each such direction the intersection with $K_1$ is a copy of $X_1$, which has Fourier dimension $s$. Hence $K_1$ is an $(s,1)$-Kakeya set.
\end{proof}

The second example is constructed by taking all line segments to share the same starting point, namely the origin in $\mathbb{R}^2$, and to follow a direction set $E \subseteq S^1$ with $\haus E = t$.

\begin{prop}\label{2texample}
Given a direction set $E \subseteq S^1$ satisfying $\haus E = t$, define
\begin{equation}
	K_0 = \{ r e : r \in [0,1], \, e \in E \}.\nonumber
\end{equation}
%Assume further that the diameter of $E$ is less than $\frac{1}{10}$, which ensures that the angle between any two elements $e_1$ and $e_2$ is less than $\frac{\pi}{4}$.
 Then
\begin{equation}
	\four K_0 = 2t.\nonumber
\end{equation}
\end{prop}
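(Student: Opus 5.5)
The plan is to prove the two inequalities $\four K_0 \geq 2t$ and $\four K_0 \leq 2t$ separately. For the lower bound I would run the argument of Theorem~\ref{thm1} directly, exploiting the fact that each $R_e=[0,1]$ is a full interval. Using $1/|u|$ decay in place of the $s=1$ exponent gives the better exponent $2t$ instead of $2t/(1+2t)$. For the upper bound I would show that Fourier decay of a measure on $K_0$ forces a Frostman condition on $E$, via a tube estimate. Throughout I assume, as usual, that $E$ is compact (or at least Borel), so that Frostman's lemma applies.

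\emph{Lower bound.} Fix $\varepsilon>0$ and take a Frostman measure $\mu$ on $E$ with $\mu(B(x,r))\lesssim r^{t-\varepsilon}$. Define $\eta$ by
\[
\int f\,d\eta=\int_E\int_0^1 f(re)\,dr\,d\mu(e).
\]
Then $\widehat{\eta}(\xi)=\int_E \widehat{\mathbf 1_{[0,1]}}(e\cdot\xi)\,d\mu(e)$, and $|\widehat{\mathbf 1_{[0,1]}}(u)|\lesssim \min\{1,|u|^{-1}\}$. I would decompose dyadically in the size of $|e\cdot\xi|$. As in the proof of Theorem~\ref{thm1}, the set $\{e:|e\cdot\xi|\leq 2^k\}$ lies in two arcs of length $\lesssim 2^k/|\xi|$, so it has $\mu$-measure $\lesssim (2^k/|\xi|)^{t-\varepsilon}$. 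This yields
\[
|\widehat\eta(\xi)|\lesssim |\xi|^{-(t-\varepsilon)}+\sum_{0\leq k\lesssim \log|\xi|}2^{-k}\bigl(2^{k}/|\xi|\bigr)^{t-\varepsilon}\lesssim |\xi|^{-(t-\varepsilon)}.
\]
The geometric sum converges because $t-\varepsilon<1$; the case $t=1$ can be handled by losing another $\varepsilon$. Hence $\four K_0\geq 2t-2\varepsilon$, and letting $\varepsilon\to0$ gives $\four K_0\geq 2t$.

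\emph{Upper bound.} Suppose $\eta$ is a nonzero finite measure on $K_0$ with $|\widehat\eta(\xi)|\lesssim|\xi|^{-\sigma/2}$ and $\sigma<2$; the goal is to show $\sigma\leq 2t$. The key step, which I expect to be the main obstacle, is a tube estimate:
\[
\eta(T)\lesssim \delta^{\sigma/2}\quad\text{for every } \delta\text{-tube } T=\{x\in B(0,2):|x\cdot\omega-c|\leq\delta\}.
\]
To prove it, I would dominate $\mathbf 1_T$ by a Schwartz function $\phi(x)=\psi((x\cdot\omega-c)/\delta)\chi(x)$, with $\psi,\chi$ fixed nonnegative bumps. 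The transform $\widehat\phi$ is essentially concentrated on the slab where $|\xi\cdot\omega|\lesssim\delta^{-1}$ and $|\xi\cdot\omega^\perp|\lesssim1$, with size $\lesssim\delta$ there and rapid decay outside. Parseval then gives
\[
\eta(T)\leq\int\widehat\phi\,\overline{\widehat\eta}\lesssim \delta\int_{|u|\lesssim\delta^{-1}}(1+|u|)^{-\sigma/2}\,du+O(\delta^{N})\lesssim\delta^{\sigma/2},
\]
and the only care needed is in controlling the tails of $\widehat\phi$.

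Granting the tube estimate, the decay of $\widehat\eta$ implies that $\eta$ has no atoms, so $\eta(\{0\})=0$. Hence some annulus $A=\{r_0\leq|x|\leq1\}$ has $\eta(A)>0$. Let $\rho=\pi_\#(\eta|_A)$ where $\pi(x)=x/|x|$; this is a nonzero measure supported on $E$. For $e\in S^1$, the set $\pi^{-1}(B(e,\delta))\cap A$ is contained in a tube through the origin of width $\lesssim\delta$. The tube estimate therefore gives $\rho(B(e,\delta))\lesssim\delta^{\sigma/2}$. By the mass distribution principle, $t=\haus E\geq\sigma/2$. Taking the supremum over admissible $\sigma$ yields $\four K_0\leq 2t$, which completes the proof.
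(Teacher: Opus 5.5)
Your proposal is correct and takes essentially the same route as the paper. The lower bound is Oberlin's slicing argument: you use $\mathbf{1}_{[0,1]}$ with a dyadic sum, where the paper uses a smooth bump and lets $s\to\infty$. The upper bound turns Fourier decay into a $\delta^{\sigma/2}$ bound on tubes through the origin, hence a Frostman bound on the pushed-forward direction measure; the paper gets the tube bound by projecting onto the line orthogonal to $e$ and applying Lemma~\ref{projective} instead of your anisotropic bump, and your no-atoms removal of the origin handles a point the paper glosses over.
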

\begin{rem}
The example in Proposition~\ref{2texample} shows that restricted Kakeya sets need not be Salem. Indeed, although the Fourier dimension of $K_0$ is exactly $2t$, its Hausdorff dimension is at least $t+1$ by \cite{restrictedkakeyaconjecture}. Therefore, for $0<t<1$, the Fourier and Hausdorff dimensions of $K_0$ do not coincide.
\end{rem}
In the proof of Proposition~\ref{2texample}, we will need the following lemma due to Mitsis \cite{projective}. For completeness, we include the proof below.

\begin{lma}\label{projective}
Suppose that $\alpha < d$ and $\mu$ is a finite Borel measure on $\mathbb{R}^d$ with Fourier decay
\begin{equation}
	|\widehat{\mu}(\xi)| \leq C_1 |\xi|^{-\alpha}\nonumber
\end{equation}
for all $\xi \in \mathbb{R}^d$. Then, for all $x \in \mathbb{R}^d$ and $r > 0$,
\begin{equation}
	\mu(B(x,r)) \leq C_1 C_2 r^{\alpha},\nonumber
\end{equation}
where the constant $C_2$ depends only on the Euclidean dimension $d$.
\end{lma}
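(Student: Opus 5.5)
We would prove Lemma~\ref{projective} by testing $\mu$ against a smooth bump adapted to the ball and using Plancherel/Parseval to move the computation to the Fourier side, where the decay hypothesis applies. Concretely, fix a nonnegative Schwartz function $\phi \in \mathcal{S}(\mathbb{R}^d)$ with $\phi \geq 1$ on $B(0,1)$ and with $\widehat{\phi}$ compactly supported (or at least rapidly decaying), for instance $\phi = |\widehat{\psi}|^2$ for a suitable compactly supported $\psi$; such a $\phi$ exists and depends only on $d$. Set $\phi_{x,r}(y) = \phi((y-x)/r)$, so that $\phi_{x,r} \geq \mathds{1}_{B(x,r)}$ and hence
\[
\mu(B(x,r)) \leq \int \phi_{x,r}\, d\mu .
\]

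Next we write $\int \phi_{x,r}\,d\mu = \int \widehat{\phi_{x,r}}(\xi)\,\overline{\widehat{\mu}(\xi)}\,d\xi$ (up to the sign convention, justified since $\phi_{x,r}$ is Schwartz and $\mu$ is finite), and note that $\widehat{\phi_{x,r}}(\xi) = r^d e^{-2\pi i x\cdot\xi}\widehat{\phi}(r\xi)$. Taking absolute values and inserting the hypothesis $|\widehat{\mu}(\xi)| \leq C_1|\xi|^{-\alpha}$ gives
\[
\mu(B(x,r)) \leq C_1 r^d \int_{\mathbb{R}^d} |\widehat{\phi}(r\xi)|\,|\xi|^{-\alpha}\,d\xi .
\]
The substitution $\zeta = r\xi$ turns the right-hand side into $C_1 r^{\alpha}\int |\widehat{\phi}(\zeta)|\,|\zeta|^{-\alpha}\,d\zeta$, and we take $C_2$ to be this last integral.

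The only point requiring care is that $C_2 < \infty$, and this is exactly where $\alpha < d$ enters: near the origin $|\zeta|^{-\alpha}$ is locally integrable in $\mathbb{R}^d$ precisely because $\alpha < d$, while at infinity the rapid decay of $\widehat{\phi}$ handles the tail (for $\alpha \geq 0$; if $\alpha<0$ the claim is still fine by the same computation since $\widehat\phi$ is Schwartz). Since $\phi$ was chosen once and for all depending only on $d$, so is $C_2$, and the bound holds uniformly in $x$ and $r$. I expect no genuine obstacle beyond choosing $\phi$ correctly (nonnegative and dominating the indicator of the unit ball) and justifying the Parseval identity for a measure against a Schwartz function, which is standard.
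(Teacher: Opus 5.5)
Your proposal is correct and follows essentially the same route as the paper: dominate the indicator of the ball by a rescaled nonnegative Schwartz bump with compactly supported Fourier transform, pass to the Fourier side via Parseval, insert the decay bound, and change variables $\zeta = r\xi$. You also state explicitly that $C_2<\infty$ because $|\zeta|^{-\alpha}$ is locally integrable at the origin exactly when $\alpha<d$, which the paper leaves implicit.
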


\begin{proof}
Let $\phi \in \mathcal{S}(\mathbb{R}^d)$ be a non-negative function satisfying $\phi \geq 1$ on $B(0,1)$ and $\widehat{\phi}(\xi) = 0$ for $\xi \notin B(0,1)$. The existence of such a function is shown in, for example,  \cite[Example~3.2]{M15}. Define
\begin{equation}
	\phi_{x,r}(y) = \phi\!\left( \frac{x-y}{r} \right).\nonumber
\end{equation}
Then
\begin{equation}
\begin{aligned}
	\mu(B(x,r))
	&\leq \int \phi_{x,r}(y) \, d\mu(y) 
	\qquad  \\
	&= \int \widehat{\phi_{x,r}}(z)\, \overline{\widehat{\mu}(z)} \, dz 
	\qquad \text{(Fourier inversion and product formula)} \\
	&= r^d \int e^{-2\pi i\, x \cdot z}\, \widehat{\phi}(-r z)\, \overline{\widehat{\mu}(z)} \, dz 
	\qquad \text{(translation and dilation formula)} \\
	&\leq C_1 r^d \int_{|r z| \leq 1} |\widehat{\phi}(-r z)| \, |z|^{-\alpha} \, dz 
	\qquad \text{(Fourier decay for $\widehat{\mu}$)} \\
	&= C_1 r^{\alpha} \int_{|z| \leq 1} |\widehat{\phi}(z)| \, |z|^{-\alpha} \, dz 
	\qquad \text{(change of variables)} \\
	&= C_1 C_2 r^{\alpha}, \nonumber
\end{aligned}
\end{equation}
as required.
\end{proof}
\textbf{Proof of Proposition \ref{2texample}:}

One direction, namely $\four K_0 \geq 2t$, is due to Oberlin \cite{Oberlin}. For the convenience of the reader, we sketch the proof here, which is a simple modification of the proof of Theorem~\ref{thm1}. Since each line segment contained in $K_0$ has non-empty interior, we replace the measure $\nu_e$ in the proof of Theorem~\ref{thm1} by a Schwartz function $\varphi \in \mathcal{S}(\mathbb{R})$ supported on $L_e$. Since $\varphi$ has rapid Fourier decay, namely
\[
|\widehat{\varphi}(\xi)| \lesssim |\xi|^{-N}
\]
for all $N > 0$, we may let $s$ in \eqref{lowerb} tend to infinity. This yields the lower bound $\four K_0 \geq 2t$.

To prove $\four K_0 \leq 2t$, we argue by contradiction. Suppose that
\[
\four K_0 > 2t_1 > 2t_2 > 2t.
\]
Then there exists a measure $\mu_0$ supported on $K_0$ such that for all $\xi \in \mathbb{R}^2$,
\begin{equation}
	|\widehat{\mu_0}(\xi)| \lesssim |\xi|^{-t_1}.\nonumber
\end{equation}
For each $e \in E$, define the projected measure of $\mu_0$ in direction $e$ by
\begin{equation}
	\mu_{\perp_e} = (\pi_e)_{\#}\mu_0,\nonumber
\end{equation}
where the projection $\pi_e$ is given by
\[
\pi_e(x) = e + x - (x \cdot e)e.
\]
Let
\[
L_e^{\perp} = \{ x \in \mathbb{R}^2 : (x-e)\cdot e = 0 \}
\]
be the line perpendicular to $e$.

\begin{figure}[h]
	\centering
	\includegraphics[scale=1]{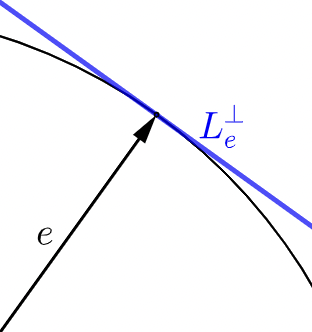}
	\caption{An illustration of $L_e^{\perp}$ which is the line tangent to $S^1$ and perpendicular to $e$.}
	\label{graphLe}
\end{figure}

Then $\mu_{\perp_e}$ is supported on $L_e^{\perp}$; see Figure~\ref{graphLe}. For $\xi \in L_e^{\perp}$, we compute
\begin{equation}
\begin{aligned}
	|\widehat{\mu_{\perp_e}}(\xi)|
	&= \left| \int_{L_e^\perp} e^{-2\pi i x \cdot \xi} \, d\mu_{\perp_e}(x) \right| \\
	&= \left| \int_{K_0} e^{-2\pi i (e + x - (x \cdot e)e)\cdot \xi} \, d\mu_0(x) \right| \\
	&= |\widehat{\mu_0}(\xi - e)|.\nonumber
\end{aligned}
\end{equation}
Since $e$ is a unit vector in $\mathbb{R}^2$, when $\xi \in L_e^{\perp}$ and $|\xi|$ is sufficiently large, we have
\begin{equation}
	|\widehat{\mu_{\perp_e}}(\xi)|
	= |\widehat{\mu_0}(\xi - e)|
	\lesssim |\xi|^{-t_1}.\nonumber
\end{equation}
It follows from Lemma~\ref{projective} that
\begin{equation}
	\mu_{\perp_e}(B(x,r)) \lesssim r^{t_1} \label{eq1}
\end{equation}
for all $x \in \mathbb{R}^2$ and $r > 0$, where the implicit constant is independent of $e$, $x$, and $r$.

On the other hand, let $\mu_E$ be the pushforward of $\mu_0$ onto the direction set $E$, that is,
\begin{equation}
	\mu_E = P_{\#}\mu_0,\nonumber
\end{equation}
where $P$ maps each line segment $L_e$ to its direction $e \in E$. Since $\haus E = t < t_2$, the measure $\mu_E$ does not satisfy the Frostman condition with exponent $t_2$. More precisely, there exist sequences $\{x_n\} \subset E$ and $\{r_n\} \subset \mathbb{R}$ with $r_n \to 0$ as $n \to \infty$ such that
\begin{equation}
	\mu_E(B(x_n,r_n)) > n r_n^{t_2}. \label{eq2}
\end{equation}
We claim that
\[
\mu_{\perp_{x_n}}(B(x_n,r_n)) \geq \mu_E(B(x_n,r_n)).
\]

\begin{figure}[h]
	\centering
	\includegraphics[scale=0.6]{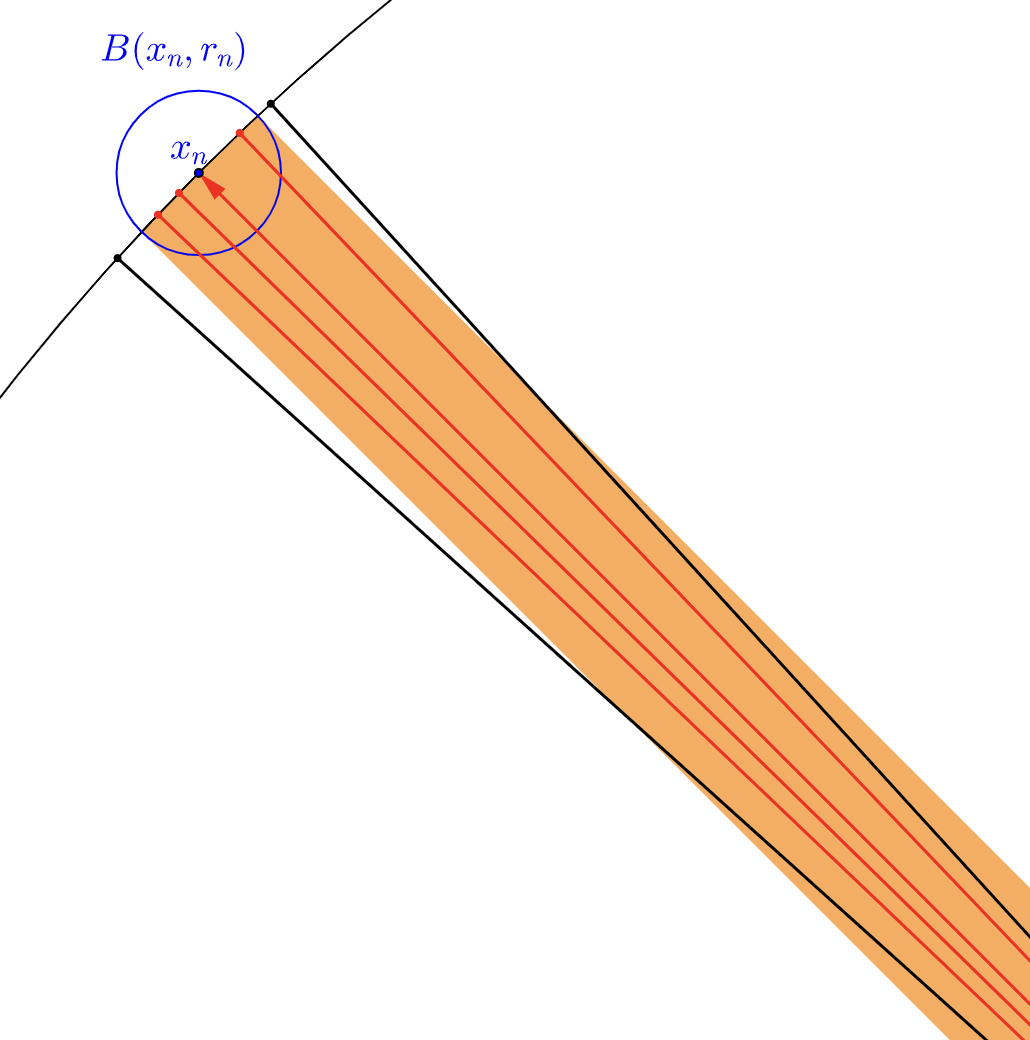}
	\caption{The red line segments are contained in $P^{-1}(B(x_n,r_n))$, while the red line segments together with parts of the black line segments are contained in the brown tube $\pi_{x_n}^{-1}(B(x_n,r_n))$}
	\label{twomeasures}
\end{figure}

Indeed, by definition of the measures $\mu_E$ and $\mu_{\perp_{x_n}}$, the preimage $P^{-1}(B(x_n,r_n))$ consists of all line segments with directions contained in $B(x_n,r_n)$. On the other hand, $\pi_{x_n}^{-1}(B(x_n,r_n))$ is a tube containing more than just those line segments; see Figure~\ref{twomeasures} for a geometric illustration.

Combining this claim with \eqref{eq1} and \eqref{eq2}, we obtain
\begin{equation}
	n r_n^{t_2}
	< \mu_E(B(x_n,r_n))
	\leq \mu_{\perp_{x_n}}(B(x_n,r_n))
	\lesssim r_n^{t_1}.\nonumber
\end{equation}
Letting $r_n \to 0$ yields a contradiction. This completes the proof. \hfill \qed

\subsection{Furstenberg variants:~proof of Theorem~B}

We now prove Theorem~B. The lower bounds are contained in the following proposition.

\begin{prop}\label{furstenbergthm}
Given $s \in (0,1]$, the following statements hold.

\begin{enumerate}
	\item If $t \in (0,2]$, then for any FF-$(s,t)$-Furstenberg set $E_{s,t}$,
	\begin{equation}
		\four E_{s,t} \geq \frac{st}{s+t}.\nonumber
	\end{equation}

	\item If $t \in (1,2]$, then for any FH-$(s,t)$-Furstenberg set $\widetilde{E}_{s,t}$,
	\begin{equation}
		\four \widetilde{E}_{s,t} \geq \frac{2s(t-1)}{s+2(t-1)}.\nonumber
	\end{equation}
\end{enumerate}
\end{prop}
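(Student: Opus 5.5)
Both parts follow the strategy of Theorem~\ref{thm1}: build a measure on the Furstenberg set by integrating line measures against a measure on the line set, then split the Fourier integral according to whether the line direction is nearly orthogonal to $\xi$. Parametrise $\ell=(\theta,a)$ as in \eqref{linespara}, and write $e_\theta=(\cos\theta,\sin\theta)$ and $n_\theta=(-\sin\theta,\cos\theta)$, so that $\ell=\{a n_\theta + r e_\theta : r\in\mathbb{R}\}$. For each $\ell\in\Lambda$ take $\nu_\ell$ on $\ell\cap E$ with $|\widehat{\nu_\ell}(u)|\le c_\ell |u|^{-s/2+\varepsilon}$. As in Theorem~\ref{thm1}, restrict to the subcollection $\Lambda_n$ where $c_\ell\le n$. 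For a measure $\mu$ on $\Lambda_n$, define $\eta$ by $\int f\,d\eta=\int\int f(a n_\theta+re_\theta)\,d\nu_\ell(r)\,d\mu(\theta,a)$; measurability is handled by the same discretisation trick. Then
\[
|\widehat\eta(\xi)|\le \int |\widehat{\nu_\ell}(e_\theta\cdot\xi)|\,d\mu(\theta,a)\lesssim_n \mu(D_\alpha)+(\alpha|\xi|)^{-s/2+\varepsilon},
\]
where $D_\alpha=\{(\theta,a): |e_\theta\cdot\xi|\le\alpha|\xi|\}$ is a strip $\theta\in I_\alpha\times\mathbb{R}$, with $I_\alpha$ a union of two $\theta$-intervals of length $\approx\alpha$. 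Everything then reduces to bounding $\mu(D_\alpha)$ by a power of $\alpha$ and optimising $\alpha$.

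\textbf{Part (2).} Since $\Lambda$ is bounded, $D_\alpha$ is covered by about $\alpha^{-1}$ balls of radius $\alpha$ in the $(\theta,a)$-plane. Frostman's lemma with $\haus\Lambda_n\ge t_n$ gives $\mu(D_\alpha)\lesssim \alpha^{-1}\alpha^{t_n-\varepsilon}=\alpha^{t_n-1-\varepsilon}$, which is useful exactly when $t>1$. This is Theorem~\ref{thm1} with $t$ replaced by $t-1$, so balancing yields $\four E\ge \frac{2s(t-1)}{s+2(t-1)}$ after $\varepsilon\to0$ and $n\to\infty$.

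\textbf{Part (1).} Here $\four\Lambda\ge t$ and, as in Theorem~\ref{thm1}, I will use the bound for the subcollection $\Lambda_n$. I take $\mu$ on $\Lambda_n$ with $|\widehat\mu(z)|\lesssim|z|^{-t/2+\varepsilon}$. By Lemma~\ref{projective} this gives only $\mu(B(x,r))\lesssim r^{t/2-\varepsilon}$, which would be useless after covering the strip. The better route is to estimate the strip mass directly in Fourier terms. Take $\psi\ge 1$ on $[-1,1]$ with $\widehat\psi$ supported in $[-1,1]$, as in Lemma~\ref{projective}, and bound $\mu(D_\alpha)\le\int\psi((\theta-\theta_0)/\alpha)\,d\mu$, with the $a$-variable untouched (multiplied by a fixed bump equal to $1$ on the bounded support). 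The Fourier transform of this test function is concentrated near the $\theta$-frequency axis with width $\alpha^{-1}$. Pairing it with $\widehat\mu$ exactly as in Lemma~\ref{projective} gives
\[
\mu(D_\alpha)\lesssim \alpha\int_{|z|\lesssim \alpha^{-1}}|z|^{-t/2}\,dz\approx \alpha^{t/2}
\]
in the one-dimensional frequency direction; the transverse direction contributes only $O(1)$ because its bump is fixed. In effect, the pushforward of $\mu$ to the $\theta$-axis has decay $|\cdot|^{-t/2}$ (it is the restriction of $\widehat\mu$ to a line), so it is $(t/2)$-Frostman. Then $\mu(D_\alpha)\lesssim\alpha^{t/2-\varepsilon}$, and balancing $\alpha^{t/2}$ against $(\alpha|\xi|)^{-s/2}$ gives $\alpha=|\xi|^{-s/(s+t)}$ and $|\widehat\eta(\xi)|\lesssim|\xi|^{-\frac{st}{2(s+t)}}$. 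Hence $\four E\ge \frac{st}{s+t}$.

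The main obstacle is the Fourier-dimension bookkeeping for the subcollections $\Lambda_n$. Fourier dimension is not countably stable, so $\four\Lambda_n$ need not approach $\four\Lambda$. I would first try to avoid restricting the line set at all: keep a single $\mu$ on $\Lambda$ with the Fourier decay, and make the constants $c_\ell$ uniform by passing to a set $\Lambda_n$ of positive $\mu$-measure and integrating against $\mu$ itself. The strip estimate then uses only $\mu\restriction\Lambda_n\le\mu$, which is harmless because only an upper bound on $\mu(D_\alpha)$ is needed. The resulting measure lives on $\bigcup_{\ell\in\Lambda_n}\ell\cap E\subseteq E$, so monotonicity of Fourier dimension finishes the argument. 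The same device also simplifies Part (2).
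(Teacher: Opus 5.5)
Your proposal is correct and follows essentially the same route as the paper: you integrate the line measures against a measure on $\Lambda$ and split along the strip of nearly orthogonal directions. In Part~(1) you bound the strip mass by pushing the Fourier-decaying measure to the $\theta$-axis and applying Lemma~\ref{projective} in one dimension, and in Part~(2) by covering the strip with about $\alpha^{-1}$ Frostman balls, exactly as the paper does. The only difference is how the constants $c_\ell$ are made uniform: the paper rescales each line measure to $\nu_\ell/c_\ell$ and keeps the whole measure on $\Lambda$. Your final device of restricting that measure to a positive-measure subcollection where $c_\ell\le n$ is equivalent, and it rightly replaces your initial idea of taking a Fourier-decaying measure on $\Lambda_n$ itself, which fails because Fourier dimension is not countably stable.
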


\begin{proof}
Suppose that $E_{s,t}$ is an FF-$(s,t)$-Furstenberg set in $\mathbb{R}^2$. Since for each line $\ell \in \Lambda$, the intersection $\ell \cap E_{s,t}$ has Fourier dimension at least $s$, for every $\varepsilon > 0$ there exists a measure $\nu_{\ell}$ supported on $\ell \cap E_{s,t}$ such that
\begin{equation}
	|\widehat{\nu_{\ell}}(z)| \leq c_{\ell} |z|^{-(s-\varepsilon)/2}\nonumber
\end{equation}
for all $z \in \mathbb{R}$. Recall that  are viewing the measures $\nu_{\ell}$ as being supported on $\mathbb{R}$. Here we can assume that all the $c_{\ell} \geq 1$. Since $\Lambda$ has Fourier dimension at least $t$, there exists a measure $\eta$ supported on $\Lambda$ such that
\begin{equation}
	|\widehat{\eta}(\xi)| \lesssim |\xi|^{-(t-\varepsilon)/2} \label{fdecayeta}
\end{equation}
for all $\xi \in \mathbb{R}^2$.

To make the constants $c_{\ell}$ uniformly bounded, we replace each $\nu_{\ell}$ by $\nu_{\ell}' = \nu_{\ell}/c_{\ell}$. Then
\begin{equation}
	|\widehat{\nu_{\ell}'}(z)| \leq |z|^{-(s-\varepsilon)/2}. \label{flinemeauredecay}
\end{equation}
Using the measure $\eta$ on $\Lambda$ and the measures $\nu_{\ell}'$ on $\ell \cap E_{s,t}$, we define a measure $\mu$ on $E_{s,t}$ by
\begin{equation}
	\int_{E_{s,t}} f \, d\mu
	=
	\int_{\Lambda} \int_{\ell} f(a_{\ell}+r e_{\ell}) \, d\nu_{\ell}'(r)\, d\eta(\ell),\nonumber
\end{equation}
where $a_{\ell}$ is the translation parameter of $\ell$, $e_{\ell}$ is its direction, and $r$ is the parameter on $\ell \cap E_{s,t}$.  Recall our parametrisation of the space of affine lines from \eqref{linespara}.  Similar to above, we assume here without loss of generality that $\ell \mapsto a_\ell$ is $\eta$-measurable.  If it is not, then we mitigate this issue via Oberlin's finite approximation trick. The total mass of $\mu$ is finite, since
\begin{equation}
	\mu(E_{s,t}) = \int_{\Lambda} \frac{1}{c_{\ell}} \, d\eta(\ell) \leq \int_{\Lambda}  \, d\eta(\ell) < \infty.\nonumber
\end{equation}
Further, the Fourier transform of $\mu$ is given by
\begin{equation}
	\widehat{\mu}(\xi)
	=
	\int_{\Lambda} \int_{\ell} e^{-2 \pi i(a_{\ell}+r e_{\ell}) \cdot \xi} \, d\nu_{\ell}'(r)\, d\eta(\ell).\nonumber
\end{equation}
As in the proof of Theorem~\ref{thm1}, we estimate
\begin{equation}
	|\widehat{\mu}(\xi)|
	\leq
	\int_{\Lambda} |\widehat{\nu_{\ell}'}(e_{\ell} \cdot \xi)| \, d\eta(\ell).\nonumber
\end{equation}
For $\alpha \in [0,1)$, define the strip
\begin{equation}
	S_{\alpha}= \left\{\ell \in \Lambda : |e_{\ell} \cdot \xi| < \alpha |\xi| \right\}.\nonumber
\end{equation}
Then the width of $S_{\alpha}$ is less than $\alpha$. Applying \eqref{fdecayeta} and Lemma~\ref{projective}, we obtain
\begin{equation}
	\eta(S_{\alpha})
	=
	\eta_X(B(\xi^{\perp},\alpha/2))
	\lesssim
	\alpha^{(t-\varepsilon)/2}, \label{ffstrip}
\end{equation}
where $\xi^{\perp}$ is a unit vector orthogonal to $\xi$, $B(\xi^{\perp},\alpha/2)$ is an interval on the $x$-axis of length $\alpha$, and $\eta_X$ is the projection of $\eta$ onto the $x$-axis.

We now decompose the integral over $\Lambda$ as
\begin{equation}
	\begin{aligned}
		|\widehat{\mu}(\xi)|
		&\leq \int_{S_{\alpha}} |\widehat{\nu_{\ell}'}(e_{\ell} \cdot \xi)| \, d\eta(\ell)
		+
		\int_{\Lambda \setminus S_{\alpha}} |\widehat{\nu_{\ell}'}(e_{\ell} \cdot \xi)| \, d\eta(\ell) \\
		&\leq \eta(S_{\alpha}) + (\alpha |\xi|)^{-(s-\varepsilon)/2} \\
		&\lesssim \alpha^{(t-\varepsilon)/2} + (\alpha |\xi|)^{-(s-\varepsilon)/2}.\nonumber
	\end{aligned}
\end{equation}
Choosing $\alpha = |\xi|^{-\frac{s-\varepsilon}{s+t-2\varepsilon}}$, we obtain
\begin{equation}
	|\widehat{\mu}(\xi)|
	\lesssim
	|\xi|^{-\frac{(s-\varepsilon)(t-\varepsilon)}{2(s+t-2\varepsilon)}}.\nonumber
\end{equation}
Letting $\varepsilon \to 0$, this gives
\begin{equation}
	\four E_{s,t} \geq \frac{st}{s+t}.\nonumber
\end{equation}

Now let $\widetilde{E}_{s,t}$ be an FH-$(s,t)$-Furstenberg set. We repeat the above argument, but choose a measure $\eta$ supported on $\Lambda$ satisfying
\begin{equation}
	\eta(B(x,r)) \lesssim r^{t-\varepsilon},\nonumber
\end{equation}
since the assumption on $\Lambda$ is $\haus \Lambda \geq t$. In this case, the estimate \eqref{ffstrip} is replaced by
\begin{equation}
	\eta(S_{\alpha}) \lesssim \frac{1}{\alpha}\alpha^{t-\varepsilon} = \alpha^{t-1-\varepsilon},\nonumber
\end{equation}
where the factor $\frac{1}{\alpha}$ is of the same order as the number of balls of radius $\alpha$ required to cover the bounded strip $S_{\alpha}$. Carrying out the same calculation as above, we obtain
\begin{equation}
	\begin{aligned}
		|\widehat{\mu}(\xi)|
		&\leq \eta(S_{\alpha}) + (\alpha |\xi|)^{-(s-\varepsilon)/2} \\
		&\lesssim \alpha^{t-1-\varepsilon} + (\alpha |\xi|)^{-(s-\varepsilon)/2}\\
		&=2 \abso{\xi}^{-\frac{(t-1-\varepsilon)(s-\varepsilon)}{2(t-1)+s-3\varepsilon}},\nonumber
	\end{aligned}
\end{equation}
by choosing $\alpha=\abso{\xi}^{-\frac{s-\varepsilon}{2(t-1)+s-3\varepsilon}}$. 
Letting $\varepsilon \to  0$, we obtain
\begin{equation}
	\four \widetilde{E}_{s,t} \geq \frac{2s(t-1)}{s+2(t-1)},\nonumber
\end{equation}
as required.
\end{proof}

For the upper bound $s$ in Theorem~B, it is easy to check that the example $K_1$ in Proposition~\ref{sexample} is both an FF-$(s,2)$-Furstenberg set and an FH-$(s,2)$-Furstenberg set, and that its Fourier dimension is equal to $s$.
Next we show the example for $\Delta^{F,H}_{\mathcal{F}}(s,t)=0$ when $t \in (0,1]$.  We choose a set $\Lambda \subseteq \mathbb{R}$ such that
\begin{equation}
	\four \Lambda = 0, \qquad \haus \Lambda = t,\nonumber
\end{equation}
and embed it into $[0,\pi) \times \mathbb{R}$ via $u \mapsto (0,u)$. Then the set $\widetilde{E}_{1,t} =  [0,1]\times \Lambda$ is an FH-$(1,t)$-Furstenberg and, by  \cite[Theorem~3.2]{product}, the Fourier dimension of $\widetilde{E}_{1,t}$ is $0$.
Finally, for the upper bound $2t$ for $\Delta^{F,F}_{\mathcal{F}}(s,t)$ in Theorem~B, we need the following lemma, which connects the Fourier dimension with the lower box dimension of the collection of lines.

\begin{lma}\label{generalffupperbound}
Suppose that $E_{s,t}$ is a minimal FF-$(s,t)$-Furstenberg set associated to a collection of lines $\Lambda \subseteq [0,\pi)\times\mathbb{R}$ with $t<1$, in the sense that
\[
E_{s,t}=\bigcup_{\ell\in\Lambda} F_\ell,
\]
where $F_\ell\subseteq \ell$ and $\four F_\ell\geq s$ for each $\ell\in\Lambda$. If $\lbd \Lambda=t$, then
\[
\four E_{s,t}\leq 2t.
\]
\end{lma}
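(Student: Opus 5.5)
The plan is to argue by contradiction, in the spirit of the upper bound in Proposition~\ref{2texample}. Projecting $\mu$ onto lines gives a one-dimensional Frostman bound, and this is played off against the box-counting bound on $\Lambda$.

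Suppose $\four E_{s,t} > 2t$, and fix $t_1$ with $t < t_1 < 1$ and $2t_1 < \four E_{s,t}$; this is possible since $t<1$. Then there is a non-zero finite Borel measure $\mu$ supported on $E_{s,t}$ with $|\widehat{\mu}(\xi)| \lesssim |\xi|^{-t_1}$. Since $\mu$ is finite, $|\widehat{\mu}| \leq \mu(\mathbb{R}^2)$ everywhere. So, after enlarging the constant, $|\widehat\mu(\xi)| \leq C|\xi|^{-t_1}$ holds for all $\xi$ (small $|\xi|$ is harmless because $|\xi|^{-t_1}\geq 1$ there).

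\textbf{Step 1 (uniform Frostman bound for projections).} For each unit vector $w \in S^1$, let $p_w(x) = x\cdot w$ and $\mu_w = (p_w)_{\#}\mu$, a measure on $\mathbb{R}$. Then $\widehat{\mu_w}(u) = \widehat{\mu}(uw)$, so $|\widehat{\mu_w}(u)| \leq C|u|^{-t_1}$ with $C$ independent of $w$. Since $t_1<1$, Lemma~\ref{projective} with $d=1$ gives
\[
\mu_w(I) \lesssim |I|^{t_1}
\]
for every interval $I \subseteq \mathbb{R}$, uniformly in $w$.

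\textbf{Step 2 (tubes).} Everything is bounded, so $E_{s,t} \subseteq B(0,R)$ for some $R$. If $\ell,\ell' \in \Lambda$ have parameters within distance $\delta$ in $[0,\pi)\times\mathbb{R}$, then $\ell' \cap B(0,R)$ lies in the $C_R\delta$-neighbourhood $T_\ell$ of $\ell$. Cover $\Lambda$ by $N_\delta(\Lambda)$ sets of diameter $\delta$, pick one line $\ell_j$ from each, and use $F_\ell \subseteq \ell$ and minimality of $E_{s,t}$ to get
\[
E_{s,t} \subseteq \bigcup_{j=1}^{N_\delta(\Lambda)} T_{\ell_j}.
\]
Let $w_j$ be the unit normal to $\ell_j$. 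Then $p_{w_j}(T_{\ell_j})$ is an interval of length $2C_R\delta$, so Step~1 gives $\mu(T_{\ell_j}) \leq \mu_{w_j}(p_{w_j}(T_{\ell_j})) \lesssim \delta^{t_1}$, with a constant independent of $j$ and $\delta$.

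\textbf{Step 3 (conclusion).} Summing over $j$ gives $\mu(\mathbb{R}^2) \lesssim N_\delta(\Lambda)\,\delta^{t_1}$. Since $\lbd \Lambda = t$, for any $\varepsilon>0$ with $t+\varepsilon<t_1$ there is a sequence $\delta_k \to 0$ with $N_{\delta_k}(\Lambda) \leq \delta_k^{-t-\varepsilon}$. Along this sequence,
\[
\mu(\mathbb{R}^2) \lesssim \delta_k^{t_1-t-\varepsilon} \to 0,
\]
contradicting $\mu \neq 0$. Hence $\four E_{s,t} \leq 2t$.

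The only point needing care is the uniformity in Step~1: the Frostman constant for the projections must not depend on the direction $w_j$. This holds because the constant in Lemma~\ref{projective} is $C_1C_2$, where $C_1$ is the single decay constant of $\widehat{\mu}$ restricted to lines through the origin and $C_2$ depends only on the ambient dimension. The tube-containment in Step~2 is routine given boundedness.
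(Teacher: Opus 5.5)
Your proof is correct and follows the paper's argument: you use minimality to cover $E_{s,t}$ by $N_{\delta}(\Lambda)$ tubes, project $\mu$ along each tube's direction, and apply Lemma~\ref{projective} in dimension one to bound each tube's mass by $\delta^{t_1}$, which contradicts $N_{\delta_k}(\Lambda)\leq \delta_k^{-t-\varepsilon}$. Summing over all tubes is equivalent to the paper's pigeonhole step, and your remarks that the constant is independent of the direction and that $t_1<1$ is needed spell out points the paper uses without comment.
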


\begin{proof}
Let $N_{\delta}(\Lambda)$ be the smallest number of balls of radius $\delta$ needed to cover $\Lambda$. In what follows, for each $\ell \in \Lambda$, let $T_{\delta}^{e_{\ell}}(a_{\ell})$ denote a $\delta \times 1$ tube in $\mathbb{R}^2$ with direction $e_{\ell}$ and translation parameter $a_{\ell}$.

Note that in the parameter space $[0,\pi) \times \mathbb{R}$, a ball of radius $\delta$ consists of a collection of lines whose directions differ by at most $2\delta$ and whose translation parameters also differ by at most $2\delta$. Since $E_{s,t}$ is bounded, for any $\ell' \in \Lambda$, we may assume that for all $\ell \in B(\ell',\delta)$, the intersection $\ell \cap E_{s,t}$ is contained in a tube $T_{\delta}^{e'}(a')$.

For every $\varepsilon > 0$ satisfying $t+2\varepsilon <1$, there exists a sequence $\delta_n \to 0$ such that
\begin{equation}
	N_{\delta_n}(\Lambda) \lesssim \delta_n^{-(t+\varepsilon)}.\nonumber
\end{equation}
It follows that $E_{s,t}$ can be covered by $\delta_n^{-(t+\varepsilon)}$ many $T_{\delta_n}$-tubes.

We argue by contradiction. Suppose that
\begin{equation}
	\four E_{s,t} > 2(t+2\varepsilon).\nonumber
\end{equation}
Then there exists a probability measure $\mu$ supported on $E_{s,t}$ such that
\begin{equation}
	|\widehat{\mu}(\xi)| \lesssim |\xi|^{-(t+2\varepsilon)}.\nonumber
\end{equation}
By the pigeonhole principle, there exists a tube $T_{\delta_n}^{e_0}(a_0)$ such that
\begin{equation}
	\mu(T_{\delta_n}^{e_0}(a_0)) \gtrsim \delta_n^{t+\varepsilon}.\nonumber
\end{equation}
Project $\mu$ onto the line $L_{e_0}^{\perp}$. The projected measure $\mu_{\perp_{e_0}}$ then satisfies
\begin{equation}
	|\widehat{\mu_{\perp_{e_0}}}(z)| \lesssim |z|^{-(t+2\varepsilon)}\nonumber
\end{equation}
for all $z \in L_{e_0}^{\perp}$.
Let $b_0 \in L_{e_0}^{\perp}$ be the point obtained by projecting $a_0$ along the direction $e_0$ onto $L_{e_0}^{\perp}$. It follows from Lemma~\ref{projective} that
\begin{equation}
	\mu_{\perp_{e_0}}(B(b_0,\delta_n/2)) \lesssim \delta_n^{t+2\varepsilon}.\nonumber
\end{equation}
Since $T_{\delta_n}^{e_0}(a_0)$ projects into $B(b_0,\delta_n/2)$, we obtain
\begin{equation}
	\delta_n^{t+\varepsilon}
	\lesssim
	\mu(T_{\delta_n}^{e_0}(a_0))
	\leq
	\mu_{\perp_{e_0}}(B(b_0,\delta_n/2))
	\lesssim
	\delta_n^{t+2\varepsilon}.\nonumber
\end{equation}
Letting $n \to \infty$, we obtain a contradiction. Finally, letting $\varepsilon \to 0$, we deduce the desired upper bound $2t$.
\end{proof}

The final step is to show the existence of such a set $\Lambda$ satisfying
\begin{equation}
	\four \Lambda = \lbd \Lambda = t.\nonumber
\end{equation}

\begin{prop} \label{existenceofambda}
Let $t \in (0,1]$. Then there exists a compact set $E \subset \mathbb{R}^2$ such that
\[
\dim_F E = \dim_{\textup{H}} E = \lbd E = \ubox E =  t.
\]
\end{prop}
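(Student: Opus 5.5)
The plan is to reduce the statement to a one-dimensional construction and then embed it in the plane. I would first build a compact set $X \subseteq [0,1]$ that is Salem of dimension $t$, with $\four X = \haus X = t$, and whose lower and upper box dimensions also equal $t$. Then I would set $E = X \times \{0\} \subseteq \mathbb{R}^2$. Hausdorff and box dimensions are unchanged by this isometric embedding.

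The Fourier dimension needs a short argument, since $\four$ is defined relative to the ambient space. Let $\mu$ be a measure on $X$ with $|\widehat{\mu}(z)| \lesssim |z|^{-(t-\varepsilon)/2}$, and let $\tilde\mu$ be its pushforward to the $x$-axis. Then $\widehat{\tilde\mu}(\xi_1,\xi_2) = \widehat{\mu}(\xi_1)$, which does not decay in $\xi_2$. So a subset of a line in $\mathbb{R}^2$ has Fourier dimension zero, as the paper itself notes, and the naive embedding fails.

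I would therefore work genuinely in the plane. I would construct a random Cantor-type set $E \subseteq [0,1]^2$ directly, using Salem's random construction in Kahane's form or the construction of Fraser and Hambrook \cite{saleminRn} of Salem sets in $\RR^d$ of arbitrary dimension. The construction runs as follows.

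\textbf{Step 1: the random construction.} Fix a rapidly decreasing sequence of scales $\delta_k$, with $\delta_{k+1} \leq \delta_k^{k}$ so that each scale dominates the previous ones. At stage $k$, inside each surviving square of side $\delta_{k-1}$, choose independently and uniformly at random about $(\delta_{k-1}/\delta_k)^{t}$ subsquares of side $\delta_k$. The number of surviving squares at stage $k$ is then $\approx \delta_k^{-t}$ up to subexponential factors. By the choice of scales, this gives $\lbd E = \ubox E = t$, both directly from the covering counts. It also gives $\haus E \le t$.

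\textbf{Step 2: Fourier decay.} Let $\mu$ be the natural mass distribution giving equal weight to the stage-$k$ squares. I would show that almost surely $|\widehat{\mu}(\xi)| \lesssim |\xi|^{-t/2+\varepsilon}$. The method is to write $\widehat{\mu_k} - \widehat{\mu_{k-1}}$ as a sum of independent mean-zero terms, one for each stage-$(k-1)$ square. Hoeffding's inequality on a $\delta_k^{-3}$-net of frequencies $|\xi| \le \delta_k^{-2}$ gives $|\widehat{\mu_k}(\xi) - \widehat{\mu_{k-1}}(\xi)| \lesssim \sqrt{k\log(1/\delta_k)}\,\delta_k^{t/2}$. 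This uses that each term has size about one over the number of stage-$k$ squares. Lipschitz continuity of $\widehat{\mu_k}$ extends the bound off the net. Borel–Cantelli then makes these bounds hold for all large $k$ almost surely.

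For $|\xi|$ in the dyadic range between $\delta_{k-1}^{-1}$ and $\delta_k^{-1}$, I would combine the following:
\begin{itemize}
\item the martingale-difference bounds from the earlier stages;
\item the decay of the Fourier transform of the uniform measure on a square of side $\delta_j$, which handles $|\xi| \gg \delta_j^{-1}$ for the later stages $j>k$, where the random bound is applied only up to frequency $\delta_j^{-2}$.
\end{itemize}
Together these give $|\widehat{\mu}(\xi)| \lesssim |\xi|^{-t/2+\varepsilon}$. Hence $\four E \ge t$. Since $\four \le \haus \le \lbd \le \ubox = t$, all four dimensions equal $t$, taking $\varepsilon \to 0$ through a countable intersection of almost-sure events.

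The main obstacle is Step 2. Specifically, the difficulty is the intermediate frequencies where $|\xi|$ lies between the scale of stage $k$ and the scale of stage $k+1$. There the randomness of stage $k+1$ has not yet averaged out. The uniform-square decay only gives $|\xi|^{-1}$ per stage, and this must be weighed against the number of squares. This is exactly where the super-exponential gaps in the scales, which are needed for $\lbd = \ubox$ to hold with tight counts, pull against uniform Fourier decay.

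My first attempt would be to keep the ratios $\delta_{k-1}/\delta_k$ bounded rather than super-exponential, for instance $\delta_k = 2^{-k}$ with $2^{t}$ children on average, realised by random counts $M_k$ with $\prod_{j \le k} M_j \approx 2^{kt}$. This makes every frequency range comparable to a single stage and keeps the box counts exact. Alternatively, I would simply invoke the Fraser–Hambrook construction and verify that its covering numbers at every scale are $\approx \delta^{-t}$, which should follow from the regularity of their construction.
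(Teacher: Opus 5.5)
\textbf{There is a genuine gap: your main construction has $\ubox E=2$, not $t$.} In Step~1 you claim that $\lbd E=\ubox E=t$ follows ``directly from the covering counts''. For the construction you describe this is false, and the super-exponential gaps are exactly what break it.

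The counts $N_{\delta_k}(E)\approx\delta_k^{-t}$ at the construction scales give only $\lbd E\le t$ and $\haus E\le t$. At intermediate scales, uniformly random placement looks two-dimensional:
\begin{itemize}
\item Each stage-$(k-1)$ square receives $M_k=(\delta_{k-1}/\delta_k)^t$ random subsquares.
\item At scale $\delta^*=\delta_{k-1}M_k^{-1/2}=\delta_{k-1}^{1-t/2}\delta_k^{t/2}$, a fixed proportion of the $M_k$ boxes of side $\delta^*$ in that square meet $E$.
\item Hence $N_{\delta^*}(E)\gtrsim N_{k-1}M_k\approx\delta_k^{-t}$, while $\log(1/\delta^*)=(1-\frac{t}{2})\log(1/\delta_{k-1})+\frac{t}{2}\log(1/\delta_k)$.
\item Since $\log(1/\delta_k)\ge(k-1)\log(1/\delta_{k-1})$, the ratio $\log N_{\delta^*}(E)/\log(1/\delta^*)$ tends to $2$.
\end{itemize}
So $\ubox E=2$. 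Your closing diagnosis has the tension backwards. Rapidly separating scales are the standard device for forcing $\lbd<\ubox$, not for making the counts tight. In that construction the intermediate \emph{frequencies} are not the real obstruction; the intermediate \emph{spatial} scales are.

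\textbf{The bounded-ratio variant is the right repair, but it is not carried out.} Taking $\delta_k=2^{-k}$ with about $2^t$ random children per parent gives $N_{2^{-k}}(E)\approx 2^{kt}$ at every dyadic scale, hence $\lbd E=\ubox E=t$. However, your Step~2 as formulated does not reach the exponent $t/2$ in this setting. If you combine a per-stage Hoeffding bound $\approx 2^{-kt/2}$ with the separate deterministic bound $|\widehat{\mu_k}(\xi)|\lesssim 2^k/|\xi|$, and optimise over the cut-off stage, you only get decay $|\xi|^{-t/(2+t)}$ up to logarithms.

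What is needed is to put the square's decay inside the Hoeffding step. The centred summand attached to each stage-$k$ square has size $\lesssim N_k^{-1}\min\{1,2^k|\xi|^{-1}\}$. This gives $|\widehat{\mu_k}(\xi)-\widehat{\mu_{k-1}}(\xi)|\lesssim\sqrt{k+\log|\xi|}\,2^{-kt/2}\min\{1,2^k|\xi|^{-1}\}$, which sums over $k$ to $\lesssim|\xi|^{-t/2+\varepsilon}$ because $1-\frac{t}{2}>0$.

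Your other alternative does not help. The Fraser--Hambrook sets are Diophantine $\limsup$ sets, which are dense, so they have no regularity giving counts $\approx\delta^{-t}$.

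\textbf{The paper avoids all of this.} It takes $E=W(A)$, where $W$ is planar Brownian motion and $A\subseteq[0,\infty)$ is a self-similar set of dimension $t/2$ satisfying the open set condition. Kahane's theorem on Brownian images gives $\four E=\haus E=t$ almost surely. H\"older continuity of every exponent $<\frac12$ gives $\ubox E\le 2\ubox A=t$ for free. The chain $\four\le\haus\le\lbd\le\ubox$ then closes.
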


\begin{proof}
We first construct a compact set $A \subseteq [0,\infty)$ such that
\begin{equation}
	\haus A = \lbd A = \ubox A =  \frac{t}{2}. \label{lastexample}
\end{equation}
This can be achieved by taking $A$ to be an appropriate  self-similar set satisfying the open set condition, for which it is well-known that  the Hausdorff dimension and the box-counting dimension coincide. We refer the reader to \cite{falconer} for further details on self-similar sets and the open set condition.

Now consider the planar Wiener process $W : [0,\infty) \to \mathbb{R}^2$ and let $E = W(A)$. By Kahane's theorem \cite[Chapter 17, Theorem 1]{Kahane} on the image of planar Brownian motion, $E$ is almost surely a Salem set with Fourier dimension $t$. Moreover, since $W$ is almost surely $\alpha'$-H\"older continuous for every $\alpha' < \frac{1}{2}$, we obtain
\begin{equation}
	t= \fd E = \hd E \leq \lbd E \leq \ubd E \leq 2 \ubox A = t. \nonumber
\end{equation}
Here we used that the Hausdorff dimension of a bounded set is always bounded above by its lower box dimension.  This completes the proof.
\end{proof}

\end{document}